\newtheorem{theorem}{Theorem}
\newtheorem{corollary}[theorem]{Corollary}
\newtheorem{proposition}[theorem]{Proposition}
\newtheorem{remark}[theorem]{Remark}
\newenvironment{proof}[1][Proof]{\textbf{#1.} }{\ \rule{0.5em}{0.5em}}
\begin{document}

\title{Approximating with Gaussians}
\author{Craig Calcaterra and Axel Boldt\\Metropolitan State University\\{\small craig.calcaterra@metrostate.edu}}
\maketitle

\begin{abstract}
Linear combinations of translations of a single Gaussian, $e^{-x^{2}}$, are
shown to be dense in $L^{2}\left(  \mathbb{R}\right)  $. Two algorithms for
determining the coefficients for the approximations are given, using
orthogonal Hermite functions and least squares. Taking the Fourier transform
of this result shows low-frequency trigonometric series are dense in $L^{2}$
with Gaussian weight function.

\textit{\noindent Key Words: Hermite series, Gaussian function, low-frequency
trigonometric series}

AMS Subject Classifications: 41A30, 42A32, 42C10

\end{abstract}

\section{Linear combinations of Gaussians with a single variance are dense in
$L^{2}$}

$L^{2}\left(  \mathbb{R}\right)  $ denotes the space of square integrable
functions $f:\mathbb{R}\rightarrow\mathbb{R}$ with norm $\left\|  f\right\|
_{2}:=\sqrt{\int_{\mathbb{R}}\left|  f\left(  x\right)  \right|  ^{2}dx}$. We
use $f\underset{\epsilon}{\approx}g$ to mean $\left\|  f-g\right\|
_{2}<\epsilon$. The following result was announced in \cite{CalcGaussians}.

\begin{theorem}
\label{ThmBumps}For any $f\in L^{2}\left(  \mathbb{R}\right)  $ and any
$\epsilon>0$ there exists $t>0$ and $N\in\mathbb{N}$ and $a_{n}\in\mathbb{R}$
such that%
\[
f\underset{\epsilon}{\approx}\ \overset{N}{\underset{n=0}{%
{\textstyle\sum}
}}a_{n}e^{-\left(  x-nt\right)  ^{2}}\text{.}%
\]
\end{theorem}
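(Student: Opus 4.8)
The plan is to factor the problem into two independent approximations: first approximate $f$ by a finite linear combination of derivatives of the Gaussian $g(x):=e^{-x^{2}}$, and then approximate each such derivative by a finite difference of translates $g(x-kt)=e^{-(x-kt)^{2}}$ with $t$ small. Writing $g^{(n)}$ for the $n$-th derivative, the Rodrigues formula gives $g^{(n)}(x)=(-1)^{n}H_{n}(x)e^{-x^{2}}$, where $H_{n}$ is the degree-$n$ Hermite polynomial; since $H_{n}$ has degree exactly $n$, the span of $\{g^{(n)}:n\geq0\}$ coincides with the span of $\{x^{n}e^{-x^{2}}:n\geq0\}$.

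First I would prove this span is dense in $L^{2}(\mathbb{R})$. Suppose $u\in L^{2}(\mathbb{R})$ is orthogonal to every $x^{n}e^{-x^{2}}$, and set $h(x):=u(x)e^{-x^{2}}$, so that $\int_{\mathbb{R}}x^{n}h(x)\,dx=0$ for all $n\geq0$. Because $u\in L^{2}$ and $e^{-x^{2}+a|x|}\in L^{2}$ for every $a$, Cauchy--Schwarz shows $\int_{\mathbb{R}}|h(x)|e^{a|x|}\,dx<\infty$, so the Fourier transform $\hat h$ extends to an entire function whose Taylor coefficients at the origin are, up to constants, the vanishing moments of $h$. Hence $\hat h\equiv0$, so $h=0$ and $u=0$; the orthogonal complement is trivial and the span is dense. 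Given $\epsilon>0$ I then fix $N$ and reals $c_{0},\ldots,c_{N}$ with $\|f-\sum_{n=0}^{N}c_{n}g^{(n)}\|_{2}<\epsilon/2$.

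Next I would replace each derivative by a finite difference. With the difference $\Delta_{-t}^{n}g(x)=\sum_{k=0}^{n}\binom{n}{k}(-1)^{n-k}e^{-(x-kt)^{2}}$, I claim $(-t)^{-n}\Delta_{-t}^{n}g\to g^{(n)}$ in $L^{2}$ as $t\to0^{+}$. By Plancherel this amounts to $\big((e^{-it\xi}-1)/(-t)\big)^{n}\hat g(\xi)\to(i\xi)^{n}\hat g(\xi)$ in $L^{2}$; the multiplier converges pointwise to $(i\xi)^{n}$ and is dominated by $|\xi|^{n}$ (using $|e^{i\theta}-1|\leq|\theta|$), while $|\xi|^{n}\hat g(\xi)$ lies in $L^{2}$ because $\hat g$ is again a Gaussian, so dominated convergence applies. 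Crucially every translate appearing here is $e^{-(x-kt)^{2}}$ with $k=0,1,\ldots,n\geq0$, exactly the form allowed in the theorem.

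Finally I would choose $t>0$ small enough that $\sum_{n=0}^{N}|c_{n}|\,\|(-t)^{-n}\Delta_{-t}^{n}g-g^{(n)}\|_{2}<\epsilon/2$, which is possible since $N$ and the $c_{n}$ are now fixed. Collecting the coefficient of each $e^{-(x-kt)^{2}}$ in $\sum_{n=0}^{N}c_{n}(-t)^{-n}\Delta_{-t}^{n}g$ produces reals $a_{0},\ldots,a_{N}$, and the triangle inequality yields $f\underset{\epsilon}{\approx}\sum_{n=0}^{N}a_{n}e^{-(x-nt)^{2}}$. I expect the main obstacle to be the second step: the difference quotients $(-t)^{-n}\Delta_{-t}^{n}g$ involve increasingly large cancellations as $t\to0$, so one must verify convergence in the $L^{2}$ norm rather than merely pointwise and confirm that, for each fixed $n$, the built-in $(-t)^{-n}$ scaling is exactly compensated by the decay of the finite-difference error; the Plancherel and dominated-convergence argument is what makes this rigorous.
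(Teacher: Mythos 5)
Your proposal is correct, and at the skeleton level it is the same decomposition the paper uses: approximate $f$ in $L^{2}$ by a finite combination $\sum_{n=0}^{N}c_{n}g^{(n)}$ of derivatives of $g(x)=e^{-x^{2}}$, then replace each $g^{(n)}$ by the scaled backward difference $\frac{1}{t^{n}}\sum_{k=0}^{n}(-1)^{k}\binom{n}{k}e^{-(x-kt)^{2}}$ and collect coefficients. What differs is how you justify the two steps, and both of your justifications are genuinely different from the paper's. For the first step, the paper appeals to completeness of the Hermite functions $H_{n}(x)e^{-x^{2}/2}$ and then (in Section \ref{SecAppAlgo}) must bridge a weight mismatch, since the Gaussian's derivatives are $(-1)^{n}H_{n}(x)e^{-x^{2}}$ rather than Hermite functions: it expands $f(x)e^{x^{2}/2}$ in the Hermite basis, truncating $f$ to $[-M,M]$ when $f e^{x^{2}/2}$ fails to be square integrable. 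Your orthogonal-complement argument --- all moments of $h=ue^{-x^{2}}$ vanish, $\hat{h}$ is entire because $\int_{\mathbb{R}}|h|e^{a|x|}dx<\infty$ for every $a$, hence $\hat{h}\equiv 0$ and $u=0$ --- proves density of the span of $\{x^{n}e^{-x^{2}}\}$ directly, avoiding the weight issue and any citation of Hermite theory. For the second step, the paper derives a pointwise Taylor-remainder formula for the divided difference (Appendix \ref{SecImpConv}) and integrates it (Proposition \ref{propLpDivDiff}) to obtain an $O(t)$ rate in every $L^{p}$ norm; your Plancherel plus dominated-convergence argument, with the multiplier $\bigl((e^{-it\xi}-1)/(-t)\bigr)^{n}$ dominated by $|\xi|^{n}$, is cleaner but is specific to $p=2$ and yields no rate. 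The trade-off is clear: the paper's heavier machinery produces explicit, computable coefficient formulas (Theorem \ref{AlgBump}), the quantitative $O(t)$ convergence, and $L^{p}$ and uniform-norm extensions (Proposition \ref{propLpDivDiff}, Remark \ref{RemUnifG}), while your argument is shorter, self-contained, and fully rigorous for the $L^{2}$ statement as stated, but non-constructive in both halves.
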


\begin{proof}
Since the span of the Hermite functions is dense in $L^{2}\left(
\mathbb{R}\right)  $ we have for some $N$%
\begin{equation}
f\underset{\epsilon/2}{\approx}\ \overset{N}{\underset{n=0}{%
{\textstyle\sum}
}}b_{n}\frac{d^{n}}{dx^{n}}\left(  e^{-x^{2}}\right)  \text{.}%
\label{PfMainHermite}%
\end{equation}
Now use finite backward differences to approximate the derivatives. We have
for some small $t>0$%
\begin{align}
& \overset{N}{\underset{n=0}{%
{\textstyle\sum}
}}b_{n}\frac{d^{n}}{dx^{n}}\left(  e^{-x^{2}}\right) \nonumber\\
& \underset{\epsilon/2}{\approx}b_{0}e^{-x^{2}}+b_{1}\tfrac{1}{t}\left[
e^{-x^{2}}-e^{-\left(  x-t\right)  ^{2}}\right]  +b_{2}\tfrac{1}{t^{2}}\left[
e^{-x^{2}}-2e^{-\left(  x-t\right)  ^{2}}+e^{-\left(  x-2t\right)  ^{2}%
}\right] \nonumber\\
& +b_{3}\tfrac{1}{t^{3}}\left[  e^{-x^{2}}-3e^{-\left(  x-t\right)  ^{2}%
}+3e^{-\left(  x-2t\right)  ^{2}}-e^{-\left(  x-3t\right)  ^{2}}\right]
+\cdots\nonumber\\
& =\overset{N}{\underset{n=0}{%
{\textstyle\sum}
}}b_{n}\frac{1}{t^{n}}\overset{n}{\underset{k=0}{%
{\textstyle\sum}
}}\left(  -1\right)  ^{k}\binom{n}{k}e^{-\left(  x-kt\right)  ^{2}}%
\text{.}\label{PfMainCoeffs}%
\end{align}
\end{proof}

This result may be surprising; it promises we can approximate to any degree of
accuracy a function such as the following characteristic function of an
interval%
\[
\chi_{\left[  -11,-10\right]  }\left(  x\right)  :=\left\{
\begin{array}
[c]{l}%
1\\
0
\end{array}
\right.
\begin{array}
[c]{l}%
\text{for }x\in\left[  -10,-11\right] \\
\text{otherwise}%
\end{array}
\]
with support far from the means of the Gaussians $e^{-\left(  x-nt\right)
^{2}}$ which are located in $\left[  0,\infty\right)  $ at the points $x=nt$.
The graphs of these functions $e^{-\left(  x-nt\right)  ^{2}}$ are extremely
simple geometrically, being Gaussians with the same variance. We only use the
right translates, and they all shrink precipitously (exponentially) away from
their means.

\noindent%
\[%
{\parbox[b]{2.5261in}{\begin{center}
\includegraphics[
natheight=9.364200in,
natwidth=13.749600in,
height=1.7244in,
width=2.5261in
]%
{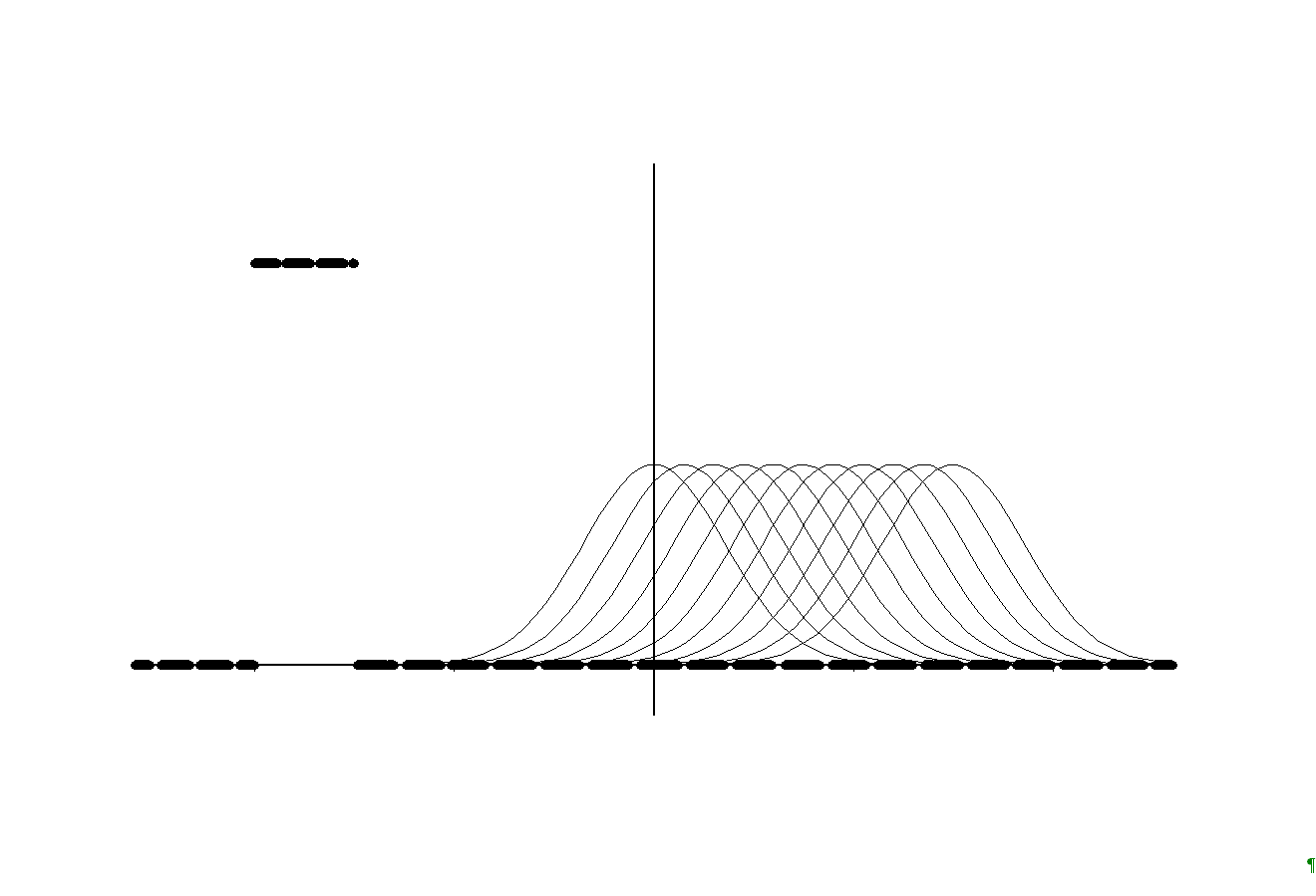}%
\\
$\sum a_n e^{-\left(  x-nt\right)  ^2}\approx$ characteristic function?
\end{center}}}%
\]

\bigskip\bigskip

\textit{Surely there is a gap in this sketchy little proof?}

No. We will, however, flesh out the details in section \ref{SecAppAlgo}. The
coefficients $a_{n}$ are explicitly calculated and the $L^{2}$ convergence
carefully justified. But these details are elementary. We include them in the
interest of appealing to a broader audience.

\textit{Then is this merely another pathological curiosity from analysis? We
probably need impractically large values of }$N$\textit{\ to approximate any
interesting functions.}

No, $N$ need only be as large as the Hermite expansion demands. Certainly this
particular approach depends on the convergence of the Hermite expansion, and
for many applications Hermite series converge slower than other Fourier
approximations--after all, Hermite series converge on all of $\mathbb{R}$
while, e.g., trigonometric series focus on a bounded interval. Hermite
expansions do have powerful convergence properties, though. For example,
Hermite series converge uniformly on finite compact subsets whenever $f$ is
twice continuously differentiable (i.e., $C^{2}$) and $O\left(  e^{-cx^{2}%
}\right)  $ for some $c>1$ as $x\rightarrow\infty$. Alternately if $f$ has
finitely many discontinuities but is still $C^{2}$ elsewhere and $O\left(
e^{-cx^{2}}\right)  $ the expansion again converges uniformly on any closed
interval which avoids the discontinuities \cite{Stone}, \cite{Szego}:. If $f$
is smooth and properly bounded, the Hermite series converges faster than
algebraically \cite{Gottlieb}.

\textit{Then is the method unstable?}

Yes, there are two serious drawbacks to using Theorem \ref{ThmBumps}.

\noindent1. Numerical differentiation is inherently unstable. Fortunately we
are estimating the derivatives of Gaussians, which are as smooth and bounded
as we could hope, and so we have good control with an explicit error formula.
It is true, though, that dividing by $t^{n}$ for small $t$ and large $n$ will
eventually lead to huge coefficients $a_{n}$ and round-off error. There are
quite a few general techniques available in the literature for combatting
round-off error in numerical differentiation. We review the well-known
$n$-point difference formulas for derivatives in section \ref{SecImpConv}.

\noindent2. The surprising approximation is only possible because it is weaker
than the typical convergence of a series in the mean. Unfortunately%
\[
f\left(  x\right)  \neq\overset{\infty}{\underset{n=0}{%
{\textstyle\sum}
}}a_{n}e^{-\left(  x-nt\right)  ^{2}}%
\]
Theorem \ref{ThmBumps} requires recalculating all the $a_{n}$ each time $N$ is
increased. Further, the $a_{n}$ are not unique. The least squares best choice
of $a_{n}$ are calculated in section \ref{SecLeastSqrs}, but this approach
gives an ill-conditioned matrix. A different formula for the $a_{n}$ is given
in Theorem \ref{AlgBump} which is more computationally efficient.

Despite these drawbacks the result is worthy of note because of the new and
unexpected opportunities which arise using an approximation method with such
simple functions. In this vein, section \ref{SecLowFTrig} details an
interesting corollary of Theorem \ref{ThmBumps}: apply the Fourier transform
to see that low-frequency trigonometric series are dense in $L^{2}\left(
\mathbb{R}\right)  $ with Gaussian weight function.

\section{Calculating the coefficients with orthogonal
functions\label{SecAppAlgo}}

In this section Theorem \ref{AlgBump} gives an explicit formula for the
coefficients $a_{n}$ of Theorem \ref{ThmBumps}. Let's review the details of
the Hermite-inspired expansion%
\[
f\left(  x\right)  =\overset{\infty}{\underset{n=0}{%
{\textstyle\sum}
}}b_{n}\frac{d^{n}}{dx^{n}}\left(  e^{-x^{2}}\right)
\]
claimed in the proof. The formula for these coefficients is%
\[
b_{n}:=\tfrac{1}{n!2^{n}\sqrt{\pi}}\underset{\mathbb{R}}{%
{\textstyle\int}
}f\left(  x\right)  e^{x^{2}}\frac{d^{n}}{dx^{n}}\left(  e^{-x^{2}}\right)
dx\text{.}%
\]
Be warned this is not precisely the standard Hermite expansion, but a simple
adaptation to our particular requirements. Let's check this formula for the
$b_{n}$ using the techniques of orthogonal functions.

Remember the following properties of the Hermite polynomials $H_{n}$
(\cite{Szego}, e.g.). Define $H_{n}\left(  x\right)  :=\left(  -1\right)
^{n}e^{x^{2}}\frac{d^{n}}{dx^{n}}e^{-x^{2}}$. The set of Hermite functions%
\[
\left\{  h_{n}\left(  x\right)  :=\dfrac{1}{\sqrt{n!2^{n}\sqrt{\pi}}}%
H_{n}\left(  x\right)  e^{-x^{2}/2}:n\in\mathbb{N}\right\}
\]
is a well-known basis of $L^{2}\left(  \mathbb{R}\right)  $ and is orthonormal
since%
\begin{equation}
\underset{\mathbb{R}}{%
{\textstyle\int}
}H_{m}\left(  x\right)  H_{n}\left(  x\right)  e^{-x^{2}}dx=n!2^{n}\sqrt{\pi
}\delta_{m,n}\text{.}\label{ExHermite2}%
\end{equation}
This means given any $g\in L^{2}\left(  \mathbb{R}\right)  $ it is possible to
write%
\begin{equation}
g\left(  x\right)  =\overset{\infty}{\underset{n=0}{%
{\textstyle\sum}
}}c_{n}\tfrac{1}{\sqrt{n!2^{n}\sqrt{\pi}}}H_{n}\left(  x\right)  e^{-x^{2}%
/2}\label{ExHermite10}%
\end{equation}
$($equality in the $L^{2}$ sense$)$ where%
\[
c_{n}:=\tfrac{1}{\sqrt{n!2^{n}\sqrt{\pi}}}\underset{\mathbb{R}}{%
{\textstyle\int}
}g\left(  x\right)  H_{n}\left(  x\right)  e^{-x^{2}/2}dx\in\mathbb{R}\text{.}%
\]
The necessity of this formula for $c_{n}$ can easily be checked by multiplying
both sides of $\left(  \ref{ExHermite10}\right)  $ by $H_{n}\left(  x\right)
e^{-x^{2}/2}$, integrating and applying $\left(  \ref{ExHermite2}\right)  $.
However, we want%
\[
f\left(  x\right)  =\overset{\infty}{\underset{n=0}{%
{\textstyle\sum}
}}b_{n}\frac{d^{n}}{dx^{n}}e^{-x^{2}}%
\]
so apply this process to $g\left(  x\right)  =f\left(  x\right)  e^{x^{2}/2}$.
But $f\left(  x\right)  e^{x^{2}/2}$ may not be $L^{2}$ integrable. If it is
not, we must truncate it: $f\left(  x\right)  e^{x^{2}/2}\chi_{\left[
-M,M\right]  }\left(  x\right)  $ is $L^{2}$ for any $M<\infty$ and
$f\cdot\chi_{\left[  -M,M\right]  }\underset{\epsilon/3}{\approx}f$ for a
sufficiently large choice of $M$. Now we get new $c_{n}$ as follows%
\begin{align*}
f\left(  x\right)  e^{x^{2}/2}\chi_{\left[  -M,M\right]  }\left(  x\right)
&  =\overset{\infty}{\underset{n=0}{%
{\textstyle\sum}
}}c_{n}\tfrac{1}{\sqrt{n!2^{n}\sqrt{\pi}}}H_{n}\left(  x\right)  e^{-x^{2}%
/2}\text{\qquad so}\\
f\left(  x\right)  \chi_{\left[  -M,M\right]  }\left(  x\right)   &
=\overset{\infty}{\underset{n=0}{%
{\textstyle\sum}
}}c_{n}\tfrac{\left(  -1\right)  ^{n}}{\sqrt{n!2^{n}\sqrt{\pi}}}\left(
-1\right)  ^{n}H_{n}\left(  x\right)  e^{-x^{2}}=\overset{\infty}%
{\underset{n=0}{%
{\textstyle\sum}
}}b_{n}\frac{d^{n}}{dx^{n}}e^{-x^{2}}%
\end{align*}
where%
\begin{align*}
c_{n}  & =\tfrac{1}{\sqrt{n!2^{n}\sqrt{\pi}}}\underset{\mathbb{R}}{%
{\textstyle\int}
}f\left(  x\right)  e^{x^{2}/2}\chi_{\left[  -M,M\right]  }\left(  x\right)
H_{n}\left(  x\right)  e^{-x^{2}/2}\left(  x\right)  dx\\
& =\tfrac{1}{\sqrt{n!2^{n}\sqrt{\pi}}}\underset{\mathbb{R}}{%
{\textstyle\int}
}f\left(  x\right)  \chi_{\left[  -M,M\right]  }\left(  x\right)  H_{n}\left(
x\right)  dx
\end{align*}
so we must have%
\begin{equation}
b_{n}=c_{n}\tfrac{\left(  -1\right)  ^{n}}{\sqrt{n!2^{n}\sqrt{\pi}}}=\tfrac
{1}{n!2^{n}\sqrt{\pi}}\underset{\mathbb{R}}{%
{\textstyle\int}
}f\left(  x\right)  \chi_{\left[  -M,M\right]  }\left(  x\right)  e^{x^{2}%
}\frac{d^{n}}{dx^{n}}e^{-x^{2}}dx\text{.}\label{Line b_n}%
\end{equation}

Now the second step of the proof of Theorem \ref{ThmBumps} claims that the
Gaussian's derivatives may be approximated by divided backward differences%
\[
\frac{d^{n}}{dx^{n}}e^{-x^{2}}\approx\frac{1}{t^{n}}\overset{n}{\underset
{k=0}{%
{\textstyle\sum}
}}\left(  -1\right)  ^{k}\binom{n}{k}e^{-\left(  x-kt\right)  ^{2}}%
\]
in the $L^{2}\left(  \mathbb{R}\right)  $ norm. We'll use the ``big oh''
notation: for a real function $\Psi$ the statement `` $\Psi\left(  t\right)
=O\left(  t\right)  $ as $t\rightarrow0$ '' means there exist $K>0$ and
$\delta>0$ such that $\left|  \Psi\left(  t\right)  \right|  <K\left|
t\right|  $ for $0<\left|  t\right|  <\delta$.

\begin{proposition}
\label{propLpDivDiff}For each $n\in\mathbb{N}$ and $p\in\left(  0,\infty
\right)  $%
\[
\left(  \underset{\mathbb{R}}{\int}\left|  \frac{d^{n}}{dx^{n}}e^{-x^{2}%
}-\frac{1}{t^{n}}%
{\textstyle\sum_{k=0}^{n}}
\left(  -1\right)  ^{k}\binom{n}{k}e^{-\left(  x-kt\right)  ^{2}}\right|
^{p}dx\right)  ^{1/p}=O\left(  t\right)  \text{.}%
\]
\end{proposition}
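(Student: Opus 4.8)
The plan is to write the $n$-th backward difference exactly as an iterated integral of a derivative of $\phi(x):=e^{-x^{2}}$, and then read off the error order. Writing $\nabla_{t}^{n}\phi(x):=\sum_{k=0}^{n}(-1)^{k}\binom{n}{k}\phi(x-kt)$ for the operator in the statement, I would first establish by induction on $n$ the representation
\[
\nabla_{t}^{n}\phi(x)=\int_{0}^{t}\cdots\int_{0}^{t}\phi^{(n)}\!\left(x-u_{1}-\cdots-u_{n}\right)\,du_{1}\cdots du_{n}.
\]
The base case is the fundamental theorem of calculus, $\phi(x)-\phi(x-t)=\int_{0}^{t}\phi'(x-u)\,du$, and the inductive step applies $\nabla_{t}$ to both sides together with the identity $\nabla_{t}\psi(x)=\int_{0}^{t}\psi'(x-u)\,du$, differentiating under the integral sign. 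Since every derivative of $\phi$ has the form (polynomial)$\,\cdot\,e^{-x^{2}}$, all such interchanges are justified.

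Given this formula, $\tfrac{1}{t^{n}}\nabla_{t}^{n}\phi(x)$ is exactly the mean of $\phi^{(n)}$ over the cube $[0,t]^{n}$ translated to $x$, so with $\sigma:=u_{1}+\cdots+u_{n}$,
\[
\frac{1}{t^{n}}\nabla_{t}^{n}\phi(x)-\phi^{(n)}(x)=\frac{1}{t^{n}}\int_{[0,t]^{n}}\Bigl[\phi^{(n)}(x-\sigma)-\phi^{(n)}(x)\Bigr]\,d\mathbf{u}.
\]
Rewriting the bracket as $-\int_{0}^{\sigma}\phi^{(n+1)}(x-v)\,dv$ exposes the $O(t)$ behaviour, because $\sigma\le nt$ forces the inner integration range to shrink linearly in $t$.

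For the $L^{p}$ estimate I would avoid Minkowski's inequality, which fails for $p<1$ whereas the statement covers all $p\in(0,\infty)$, and instead dominate pointwise. Since $\left|\int_{0}^{\sigma}\phi^{(n+1)}(x-v)\,dv\right|\le nt\,M_{t}(x)$ with $M_{t}(x):=\sup_{0\le v\le nt}|\phi^{(n+1)}(x-v)|$, integrating over the cube and dividing by $t^{n}$ gives
\[
\left|\frac{1}{t^{n}}\nabla_{t}^{n}\phi(x)-\phi^{(n)}(x)\right|\le n\,t\,M_{t}(x).
\]
For $0<t\le 1$ one has $M_{t}(x)\le M_{1}(x)$, and because $\phi^{(n+1)}(y)=q_{n+1}(y)e^{-y^{2}}$ for a polynomial $q_{n+1}$, the function $M_{1}$ is continuous and decays faster than any power of $|x|$, hence lies in $L^{p}$ for every $p\in(0,\infty)$. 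Monotonicity of the $L^{p}$ functional under pointwise domination, valid for all $p>0$, then yields $\left\|\tfrac{1}{t^{n}}\nabla_{t}^{n}\phi-\phi^{(n)}\right\|_{p}\le n\,t\,\|M_{1}\|_{p}$, which is $O(t)$.

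The main obstacle is not any single estimate but keeping the constant uniform in $t$ across the full range $p\in(0,\infty)$: the case $p<1$ rules out the convenient triangle-inequality route, which is precisely why I would funnel everything through the $t$-independent dominating function $M_{1}\in\bigcap_{p>0}L^{p}$. As a byproduct, expanding each $\phi(x-kt)$ by Taylor's theorem and invoking the classical identities $\sum_{k=0}^{n}(-1)^{k}\binom{n}{k}k^{j}=0$ for $0\le j<n$ and $=(-1)^{n}n!$ for $j=n$ would identify the explicit leading coefficient of the error, should the sharp constant be wanted rather than merely the order.
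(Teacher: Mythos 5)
Your proof is correct, but it runs through a genuinely different key lemma than the paper's. The paper invokes the finite-difference error formula derived in Appendix \ref{SecImpConv} from Taylor's theorem with Lagrange remainder,
\[
\frac{d^{n}}{dx^{n}}g\left(  x\right)  =\frac{1}{t^{n}}\sum_{k=0}^{n}\left(
-1\right)  ^{k}\binom{n}{k}g\left(  x-kt\right)  -\frac{t}{\left(  n+1\right)
!}\sum_{k=0}^{n}\left(  -1\right)  ^{k}\binom{n}{k}k^{n+1}g^{\left(
n+1\right)  }\left(  \xi_{k}\right)  \text{,}
\]
whose intermediate points $\xi_{k}=\xi_{k}\left(  x,t\right)  $ are not explicit; it then argues integrability of $x\mapsto g^{\left(  n+1\right)  }\left(  \xi_{k}\right)  $ from the exponential decay of the tails together with $\left|  \xi_{k}-x\right|  \leq k\left|  t\right|  $ --- in effect the same enveloping idea as your $M_{t}$, but left informal. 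You instead prove an exact representation of the backward difference as an iterated integral of $\phi^{\left(  n\right)  }$ (with $\phi\left(  x\right)  =e^{-x^{2}}$) over the cube $\left[  0,t\right]  ^{n}$, which eliminates the unknown points $\xi_{k}$ entirely and replaces the Appendix's Vandermonde-based derivation by a short induction on the fundamental theorem of calculus. Your route is more self-contained, and it is more careful about the full range $p\in\left(  0,\infty\right)  $: passing from termwise bounds on the $g^{\left(  n+1\right)  }\left(  \xi_{k}\right)  $ to the sum needs a substitute for Minkowski when $p<1$ (e.g.\ subadditivity of $s\mapsto s^{p}$), a point the paper passes over silently, whereas your single $t$-independent envelope $M_{1}\in\bigcap_{p>0}L^{p}$ combined with monotonicity of $f\mapsto\int\left|  f\right|  ^{p}$ handles every $p$ at once. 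What the paper's form buys in exchange is an error term with explicit coefficients $\frac{1}{\left(  n+1\right)  !}\binom{n}{k}k^{n+1}$ multiplying the $\left(  n+1\right)  $st derivative, which it reuses for the $n$-point schemes of the Appendix and for the uniform-norm claim of Remark \ref{RemUnifG}. One loose end in your write-up: the paper's big-oh convention covers $0<\left|  t\right|  <\delta$, so negative $t$ must be admitted; your argument extends verbatim by defining the envelope with $\sup_{\left|  v\right|  \leq n}$ in place of $\sup_{0\leq v\leq n}$.
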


\begin{proof}
In Appendix \ref{SecImpConv} the pointwise formula is derived:%
\[
\frac{d^{n}}{dx^{n}}g\left(  x\right)  =\frac{1}{t^{n}}%
{\textstyle\sum_{k=0}^{n}}
\left(  -1\right)  ^{k}\binom{n}{k}g\left(  x-kt\right)  -\dfrac{t}{\left(
n+1\right)  !}\overset{n}{\underset{k=0}{%
{\textstyle\sum}
}}\left(  -1\right)  ^{k}\binom{n}{k}k^{n+1}g^{\left(  n+1\right)  }\left(
\xi_{k}\right)
\]
where all of the $\xi_{k}$ are between $x$ and $x+nt$. Therefore the
proposition holds with $g\left(  x\right)  =e^{-x^{2}}$ since $g^{\left(
n+1\right)  }\left(  \xi_{k}\right)  $ is integrable for each $k$. This is not
perfectly obvious because we don't have explicit formulae for the $\xi_{k}$.
But the tails of $g^{\left(  n+1\right)  }$ vanish exponentially, the
continuity of $g^{\left(  n+1\right)  }$ guarantees a finite maximum on the
bounded interval between the tails, and $\left|  \xi_{k}-x\right|  <k\left|
t\right|  $.
\end{proof}

Continuing the derivation of the coefficients $a_{n}$ we now have for
sufficiently small $t\neq0$%
\begin{equation}
f\underset{\epsilon}{\approx}\ \overset{N}{\underset{n=0}{%
{\textstyle\sum}
}}b_{n}\frac{1}{t^{n}}\overset{n}{\underset{k=0}{%
{\textstyle\sum}
}}\left(  -1\right)  ^{k}\binom{n}{k}e^{-\left(  x-kt\right)  ^{2}}%
=\overset{N}{\underset{k=0}{%
{\textstyle\sum}
}}\left[  \overset{N}{\underset{n=k}{%
{\textstyle\sum}
}}b_{n}\frac{\left(  -1\right)  ^{k}}{t^{n}}\binom{n}{k}\right]  e^{-\left(
x-kt\right)  ^{2}}\label{Line f approxi}%
\end{equation}
In the last equality we just switched the order of summation (see
\cite{Knuth}, section 2.4 for an overview of such tricks). Combining $\left(
\ref{Line b_n}\right)  $ and $\left(  \ref{Line f approxi}\right)  $ we have

\begin{theorem}
\label{AlgBump}For any $f\in L^{2}\left(  \mathbb{R}\right)  $ and any
$\epsilon>0$ there exist $N\in\mathbb{N}$ and $t_{0}>0$ such that for any
$t\neq0$ with $\left|  t\right|  <t_{0}$%
\[
f\underset{\epsilon}{\approx}\ \overset{N}{\underset{n=0}{%
{\textstyle\sum}
}}a_{n}e^{-\left(  x-nt\right)  ^{2}}%
\]
for some choice of $a_{n}\in\mathbb{R}$ dependent on $N$ and $t$.

\noindent If $f\left(  x\right)  e^{x^{2}/2}$ is integrable, then one choice
of coefficients is%
\[
a_{n}=\frac{\left(  -1\right)  ^{n}}{n!\sqrt{\pi}}\overset{N}{\underset{k=n}{%
{\textstyle\sum}
}}\tfrac{1}{\left(  k-n\right)  !\left(  2t\right)  ^{k}}\underset{\mathbb{R}%
}{%
{\textstyle\int}
}f\left(  x\right)  e^{x^{2}}\frac{d^{k}}{dx^{k}}\left(  e^{-x^{2}}\right)
dx\text{.}%
\]
If $f\left(  x\right)  e^{x^{2}/2}$ is not integrable, replace $f$ in the
above formula with $f\cdot\chi_{\left[  -M,M\right]  }$ where $M$ is chosen
large enough that $\left\|  f-f\cdot\chi_{\left[  -M,M\right]  }\right\|
_{2}<\epsilon$.
\end{theorem}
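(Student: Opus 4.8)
The plan is to assemble the estimates already in hand and then read off the closed form for the coefficients; the existence statement will follow by bounding three sources of error, each by $\epsilon/3$, and combining them with the triangle inequality. First, I would pick $M$ with $\left\|  f-f\cdot\chi_{\left[  -M,M\right]  }\right\|  _{2}<\epsilon/3$ (taking $M=\infty$ when $f\left(  x\right)  e^{x^{2}/2}$ is already integrable). Second, because $e^{-x^{2}/2}$ is bounded and the Hermite functions are complete, the adapted expansion $\sum_{n=0}^{\infty}b_{n}\frac{d^{n}}{dx^{n}}e^{-x^{2}}$ converges to $f\cdot\chi_{\left[  -M,M\right]  }$ in $L^{2}$; I would choose $N$ so that its partial sum approximates $f\cdot\chi_{\left[  -M,M\right]  }$ within $\epsilon/3$, which freezes $N$ and the ($t$-independent) coefficients $b_{n}$ of $\left(  \ref{Line b_n}\right)  $.

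Third — the only genuinely analytic step — I would replace each derivative by its divided difference using Proposition \ref{propLpDivDiff}. The apparent danger, and the place I expect to need the most care, is that the divided-difference weights carry a factor $1/t^{n}$ that blows up as $t\rightarrow0$. The resolution is that Proposition \ref{propLpDivDiff} bounds the whole difference $\frac{d^{n}}{dx^{n}}e^{-x^{2}}-t^{-n}\sum_{k}\left(  -1\right)  ^{k}\binom{n}{k}e^{-\left(  x-kt\right)  ^{2}}$, with the $1/t^{n}$ already absorbed inside, by $O\left(  t\right)  $. Since $N$ and the $b_{n}$ are now fixed, $\left\|  \sum_{n=0}^{N}b_{n}\left(  \cdots\right)  \right\|  _{2}\leq\sum_{n=0}^{N}\left|  b_{n}\right|  O\left(  t\right)  =O\left(  t\right)  $ is a finite sum of $O\left(  t\right)  $ terms, so there is a $t_{0}>0$ making this error below $\epsilon/3$ for every $0<\left|  t\right|  <t_{0}$. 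Adding the three bounds yields $f\underset{\epsilon}{\approx}\sum_{n=0}^{N}a_{n}e^{-\left(  x-nt\right)  ^{2}}$, with $a_{n}$ the coefficient of $e^{-\left(  x-nt\right)  ^{2}}$ after the reindexing in $\left(  \ref{Line f approxi}\right)  $.

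Finally I would verify the displayed coefficient formula by substitution. Reading $\left(  \ref{Line f approxi}\right)  $ with the translate index renamed to $n$, the coefficient of $e^{-\left(  x-nt\right)  ^{2}}$ is $a_{n}=\sum_{k=n}^{N}b_{k}\frac{\left(  -1\right)  ^{n}}{t^{k}}\binom{k}{n}$. Inserting $b_{k}=\frac{1}{k!2^{k}\sqrt{\pi}}\int_{\mathbb{R}}f\left(  x\right)  e^{x^{2}}\frac{d^{k}}{dx^{k}}\left(  e^{-x^{2}}\right)  dx$ from $\left(  \ref{Line b_n}\right)  $ and simplifying with $\binom{k}{n}/k!=1/\left(  n!\left(  k-n\right)  !\right)  $ and $t^{-k}2^{-k}=\left(  2t\right)  ^{-k}$ collapses the expression to exactly $a_{n}=\frac{\left(  -1\right)  ^{n}}{n!\sqrt{\pi}}\sum_{k=n}^{N}\frac{1}{\left(  k-n\right)  !\left(  2t\right)  ^{k}}\int_{\mathbb{R}}f\left(  x\right)  e^{x^{2}}\frac{d^{k}}{dx^{k}}\left(  e^{-x^{2}}\right)  dx$. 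In the non-integrable case the identical computation holds with $f$ replaced by $f\cdot\chi_{\left[  -M,M\right]  }$, the truncation having already been spent on the first $\epsilon/3$; the integrals are then finite because the integrand is compactly supported. Everything outside the divided-difference step is just the triangle inequality and one binomial identity.
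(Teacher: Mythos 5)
Your proposal is correct and follows essentially the same route as the paper: truncate so that $f\cdot\chi_{\left[  -M,M\right]  }\left(  x\right)  e^{x^{2}/2}$ is square-integrable, expand in the adapted Hermite series with coefficients $\left(  \ref{Line b_n}\right)  $, replace the Gaussian's derivatives by divided differences via Proposition \ref{propLpDivDiff}, and reindex as in $\left(  \ref{Line f approxi}\right)  $ to read off the formula for $a_{n}$. Your explicit $\epsilon/3$ bookkeeping and your observation that multiplication by the bounded function $e^{-x^{2}/2}$ carries $L^{2}$ convergence of the Hermite series over to the expansion of $f\cdot\chi_{\left[  -M,M\right]  }$ simply make explicit what the paper leaves implicit.
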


\begin{remark}
\label{RemUnifG}The approximation in Theorem \ref{AlgBump} also holds on
$C\left[  a,b\right]  $ with the uniform norm since the Hermite expansion is
uniformly convergent on $C^{2}\left[  a,b\right]  $ (see \cite{Stone},
\cite{Szego}) and the finite difference formula's error term from Appendix
\ref{SecImpConv} converges to 0 uniformly as $t\rightarrow0^{+}$. The
Stone-Weierstrass Theorem does not apply in this situation because linear
combinations of Gaussians with a single variance do not form an algebra.
\end{remark}

\begin{remark}
\label{RemDense}As a consequence of Theorem \ref{AlgBump} for any $\epsilon>0$
the closed linear span of $\left\{  e^{-\left(  x-s\right)  ^{2}}:s\in\left[
0,\epsilon\right)  \right\}  $ is $L^{2}\left(  \mathbb{R}\right)  $. It is
even sufficient to replace $\left[  0,\epsilon\right)  $ with $\left\{
\frac{i}{2^{j}}:i,j\in\mathbb{N}\right\}  \cap\left[  0,\epsilon\right)  $.
\end{remark}

Let's explore some concrete examples in applying Theorem \ref{AlgBump}. Choose
an interesting function with discontinuities and some support negative:%
\[
f\left(  x\right)  :=\left(  x-1\right)  ^{2}\chi_{\left[  -1,2\right]
}\left(  x\right)  :=\left\{
\begin{array}
[c]{l}%
\left(  x-1\right)  ^{2}\\
0
\end{array}
\right.
\begin{array}
[c]{l}%
\text{for }x\in\left[  -1,2\right] \\
\text{otherwise}%
\end{array}
\]
and observe graphically:

\noindent$%
{\parbox[b]{1.7253in}{\begin{center}
\includegraphics[
natheight=6.103900in,
natwidth=8.864300in,
height=1.1917in,
width=1.7253in
]%
{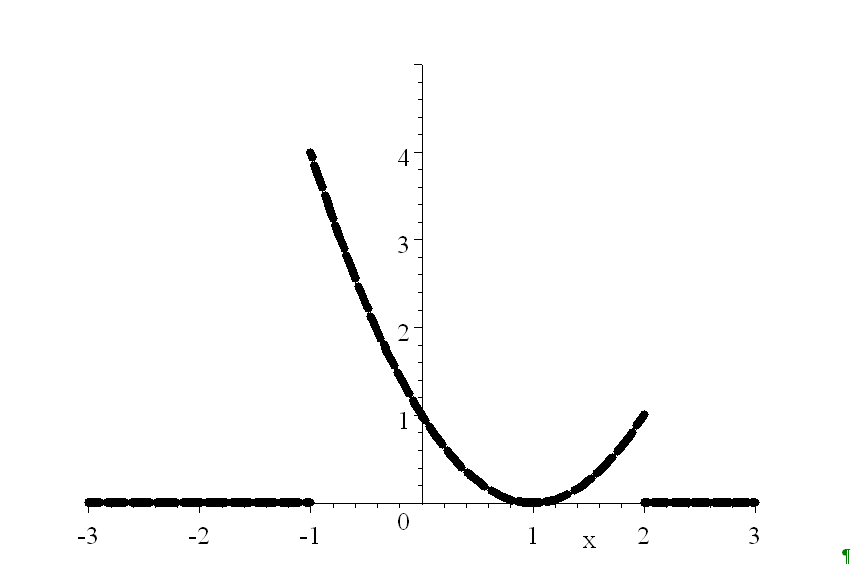}%
\\
$f\left(  x\right)  :=\left(  x-1\right)  ^2\chi_{\left[  -1,2\right]}  \left(
x\right)  $
\end{center}}}%
{\parbox[b]{1.6094in}{\begin{center}
\includegraphics[
natheight=5.917100in,
natwidth=8.583300in,
height=1.1122in,
width=1.6094in
]%
{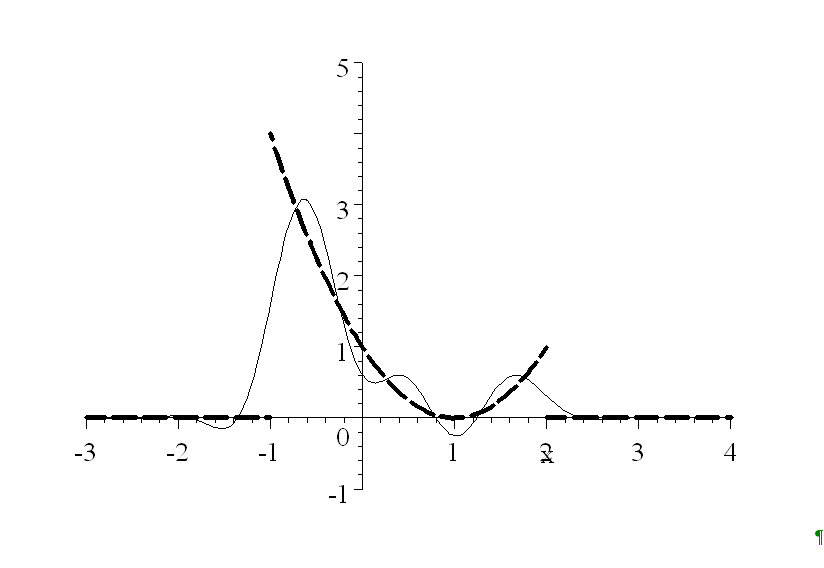}%
\\
Hermite series $N=20$
\end{center}}}%
{\parbox[b]{1.6025in}{\begin{center}
\includegraphics[
natheight=5.822800in,
natwidth=8.437100in,
height=1.1087in,
width=1.6025in
]%
{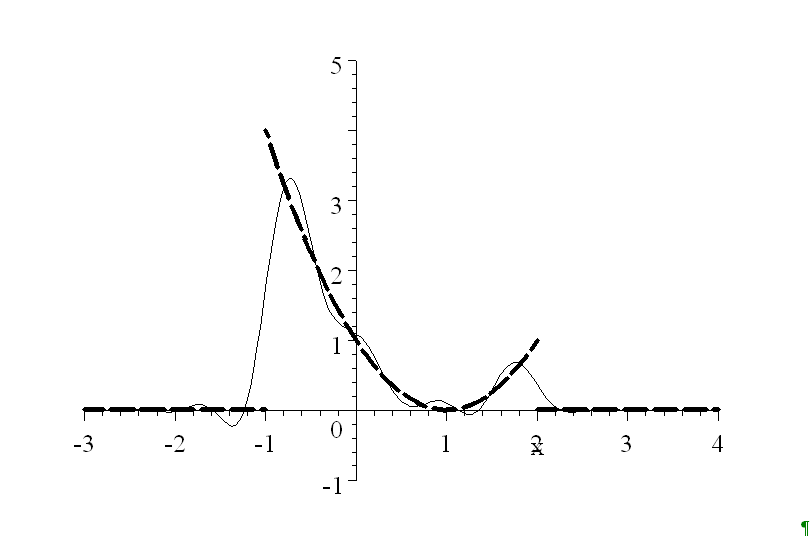}%
\\
Hermite $N=40$
\end{center}}}%
$%
\[%
{\parbox[b]{1.6155in}{\begin{center}
\includegraphics[
natheight=6.229200in,
natwidth=9.052000in,
height=1.1147in,
width=1.6155in
]%
{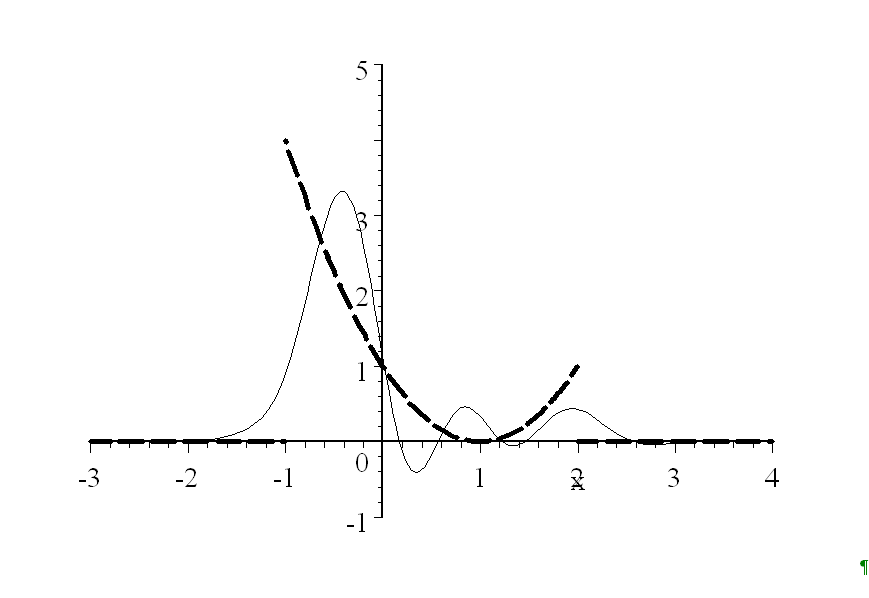}%
\\%
\begin{tabular}
[c]{c}%
Theorem \ref{AlgBump}\\
$N=20$, $t=.05$%
\end{tabular}
\end{center}}}%
{\parbox[b]{1.6172in}{\begin{center}
\includegraphics[
natheight=5.989700in,
natwidth=8.698300in,
height=1.1156in,
width=1.6172in
]%
{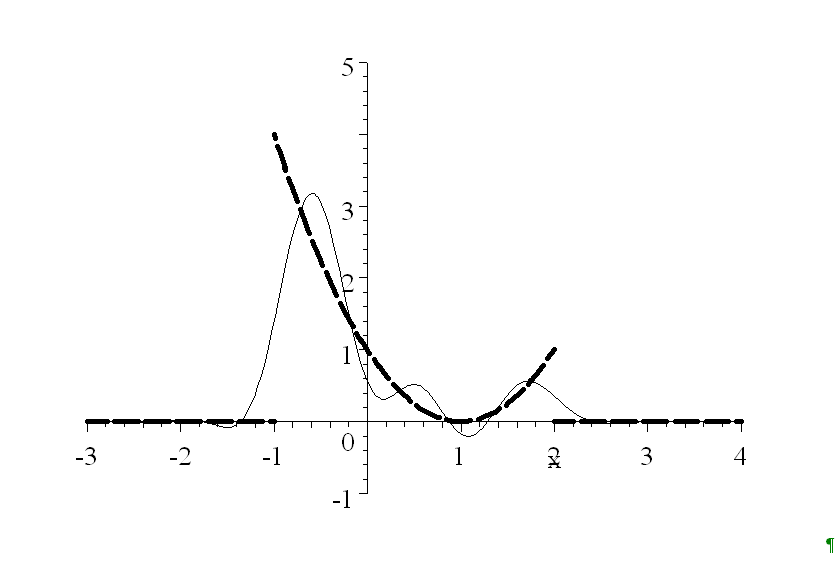}%
\\
{}%
\begin{tabular}
[c]{c}%
Theorem \ref{AlgBump}\\
$N=20$, $t=.01$%
\end{tabular}
\end{center}}}%
{\parbox[b]{1.6163in}{\begin{center}
\includegraphics[
natheight=6.124600in,
natwidth=8.895500in,
height=1.1147in,
width=1.6163in
]%
{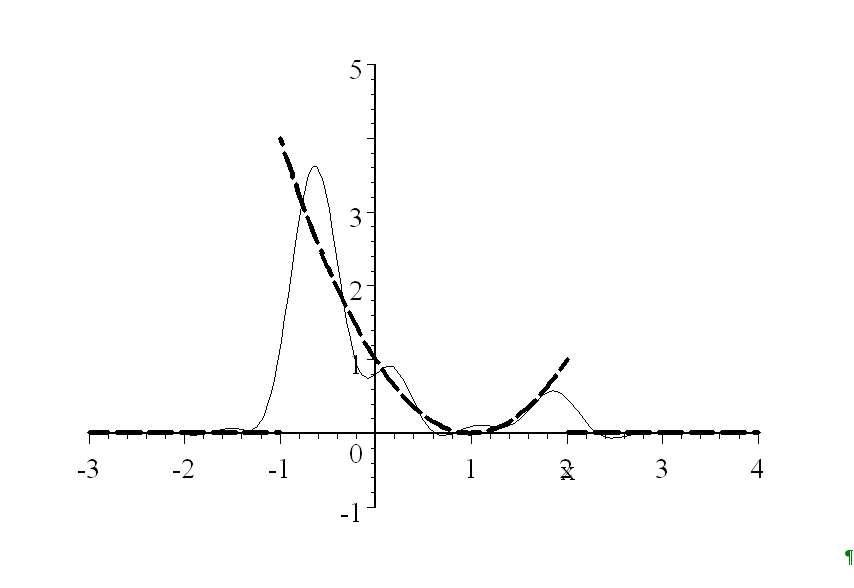}%
\\
{}%
\begin{tabular}
[c]{c}%
Theorem \ref{AlgBump}\\
$N=40$, $t=.01$%
\end{tabular}
\end{center}}}%
\]

\bigskip\bigskip

The Hermite approximation is slowed by discontinuities, but does converge. The
next choice of $f$ is continuous but not smooth.

\noindent%
\[%
{\parbox[b]{1.6414in}{\begin{center}
\includegraphics[
natheight=5.906700in,
natwidth=8.614400in,
height=1.1277in,
width=1.6414in
]%
{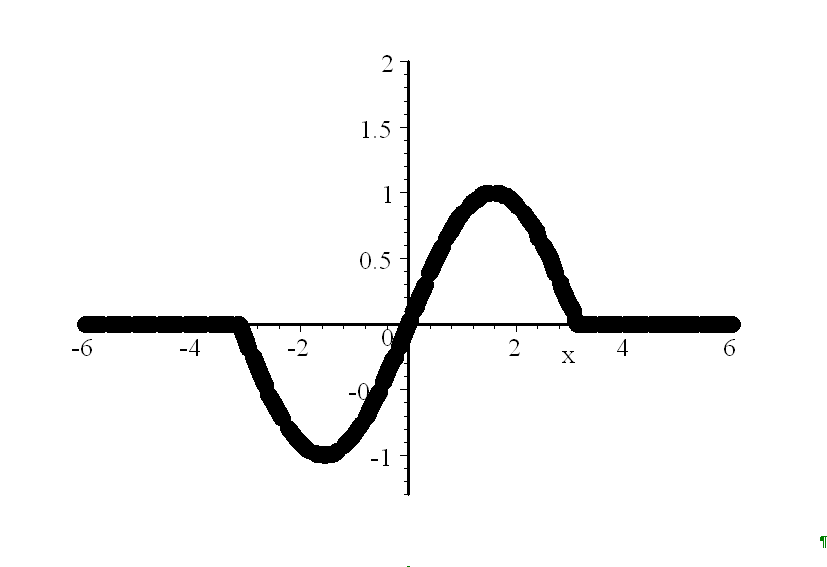}%
\\
$f\left(  x\right)  :=\left(  \sin x\right)  \chi_{\left[  -\pi,\pi\right]}
\left(  x\right)  $
\end{center}}}
{\parbox[b]{1.6622in}{\begin{center}
\includegraphics[
natheight=6.416900in,
natwidth=9.458500in,
height=1.1312in,
width=1.6622in
]%
{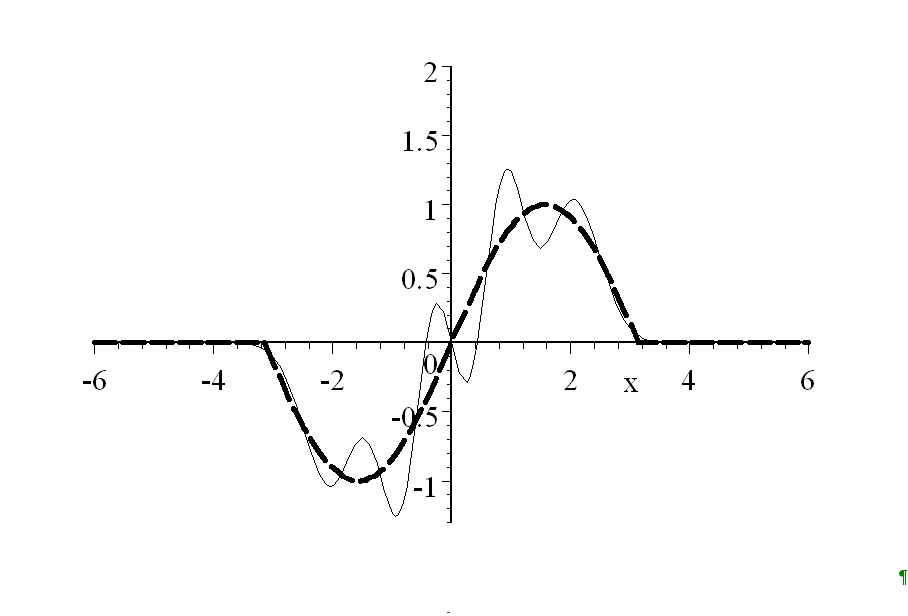}%
\\%
\begin{tabular}
[c]{c}%
Hermite expansion\\
$N=10$%
\end{tabular}
\end{center}}}
{\parbox[b]{1.6302in}{\begin{center}
\includegraphics[
natheight=6.979000in,
natwidth=10.073300in,
height=1.132in,
width=1.6302in
]%
{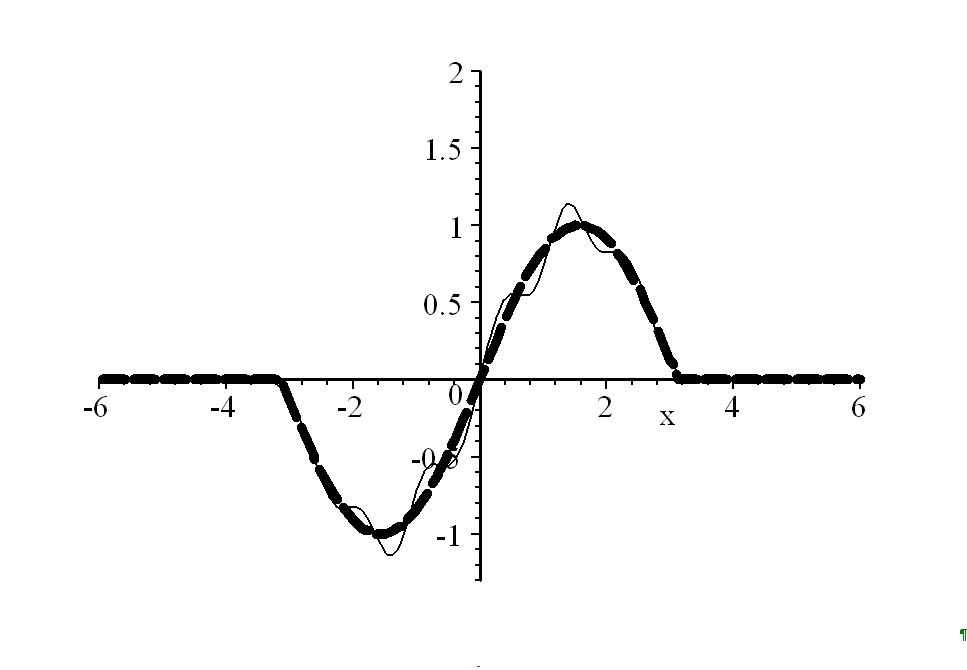}%
\\%
\begin{tabular}
[c]{c}%
Hermite expansion\\
$N=20$%
\end{tabular}
\end{center}}}
\]%
\[%
{\parbox[b]{1.6431in}{\begin{center}
\includegraphics[
natheight=6.698000in,
natwidth=9.801800in,
height=1.126in,
width=1.6431in
]%
{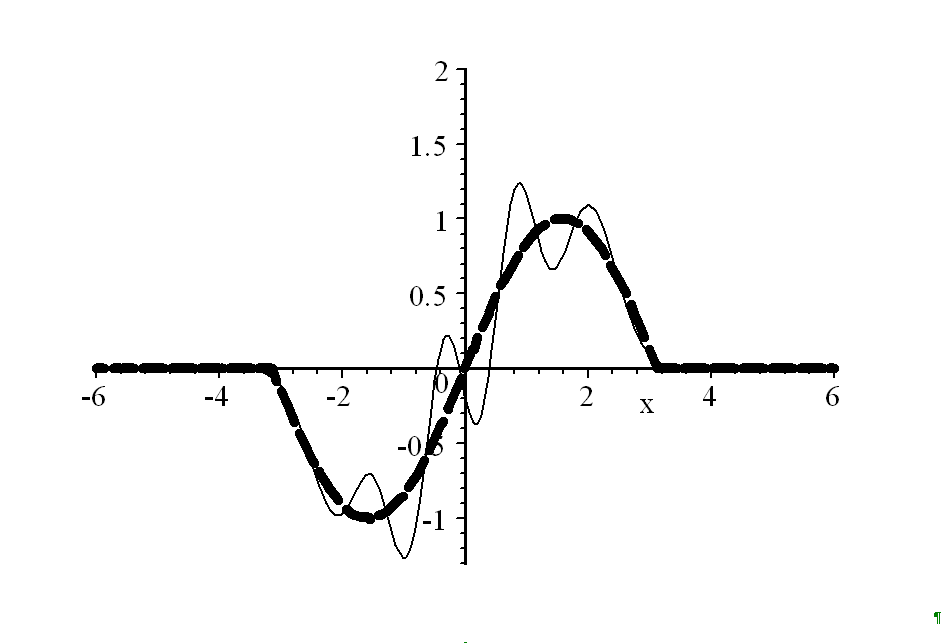}%
\\%
\begin{tabular}
[c]{l}%
Theorem \ref{AlgBump}\\
$N=10$, $t=.01$%
\end{tabular}
\end{center}}}
{\parbox[b]{1.6579in}{\begin{center}
\includegraphics[
natheight=6.906400in,
natwidth=10.114900in,
height=1.1346in,
width=1.6579in
]%
{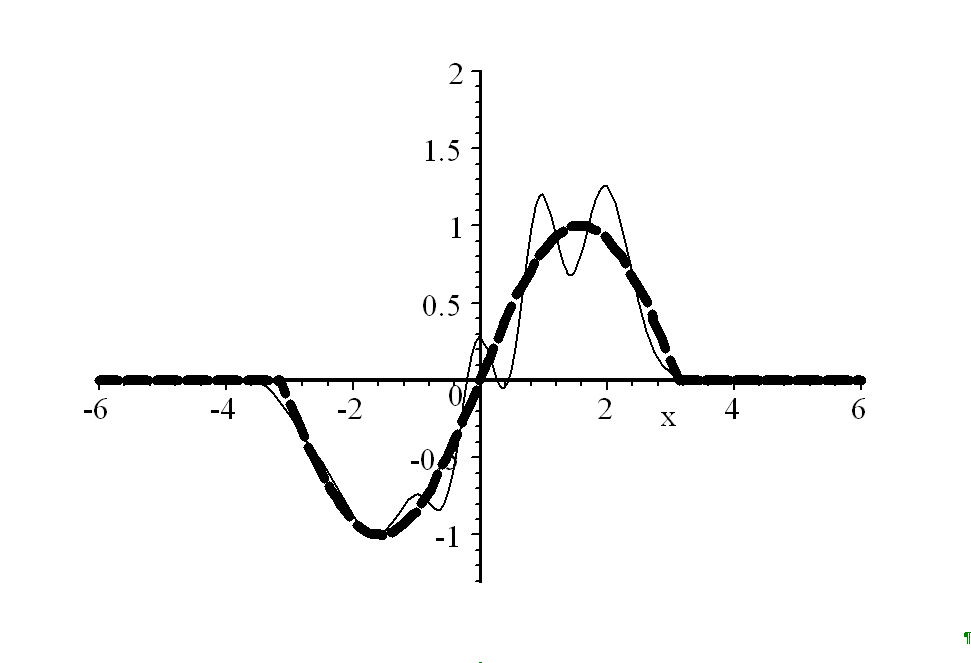}%
\\%
\begin{tabular}
[c]{l}%
Theorem \ref{AlgBump}\\
$N=20$, $t=.05$%
\end{tabular}
\end{center}}}
{\parbox[b]{1.6475in}{\begin{center}
\includegraphics[
natheight=6.906400in,
natwidth=10.114900in,
height=1.1277in,
width=1.6475in
]%
{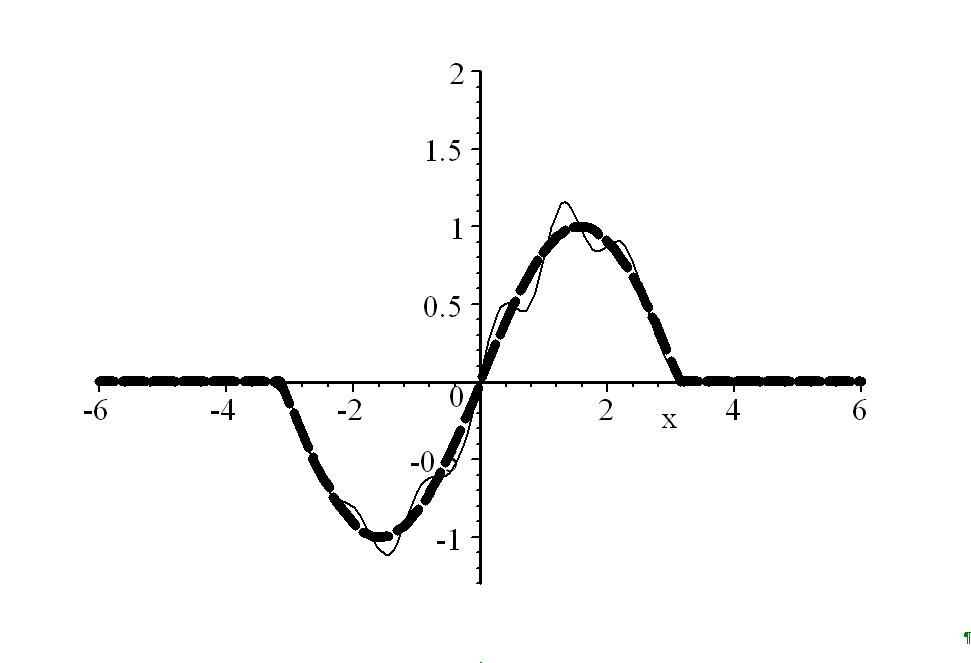}%
\\%
\begin{tabular}
[c]{l}%
Theorem \ref{AlgBump}\\
$N=20$, $t=.01$%
\end{tabular}
\end{center}}}
\]

\bigskip

In section \ref{SecImpConv} we review a standard technique accelerating this
convergence in $t$. In our experiments, though, we've found the Hermite
expansion is generally the bottleneck, not the round-off error of the
derivative approximations for $e^{-x^{2}}$.%
\[%
{\parbox[b]{1.4961in}{\begin{center}
\includegraphics[
natheight=6.770600in,
natwidth=9.916800in,
height=1.0222in,
width=1.4961in
]%
{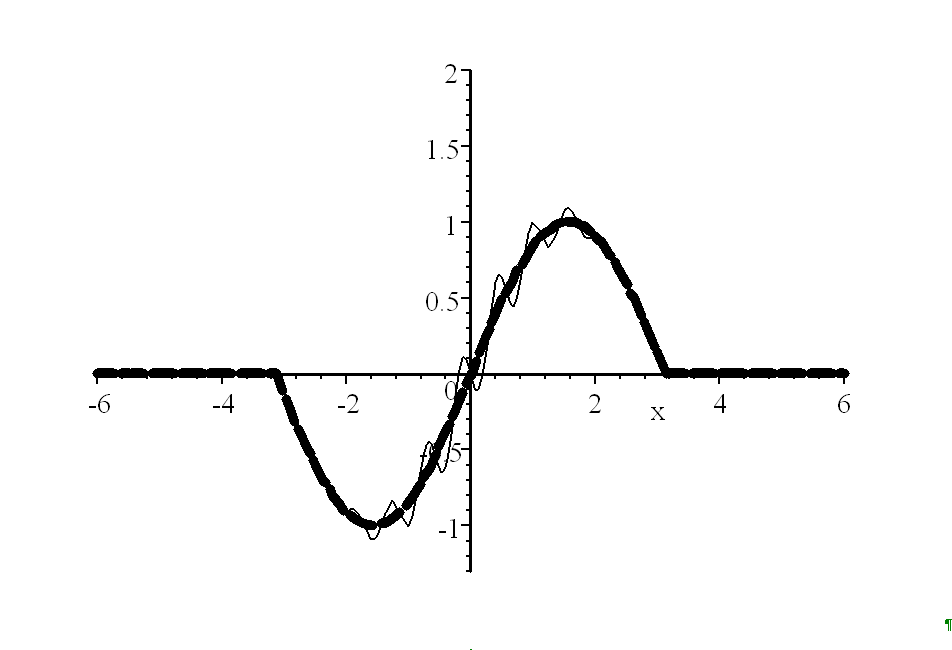}%
\\%
\begin{tabular}
[c]{c}%
Hermite expansion\\
$N=60$%
\end{tabular}
\end{center}}}
{\parbox[b]{1.4875in}{\begin{center}
\includegraphics[
natheight=6.854500in,
natwidth=10.031000in,
height=1.0188in,
width=1.4875in
]%
{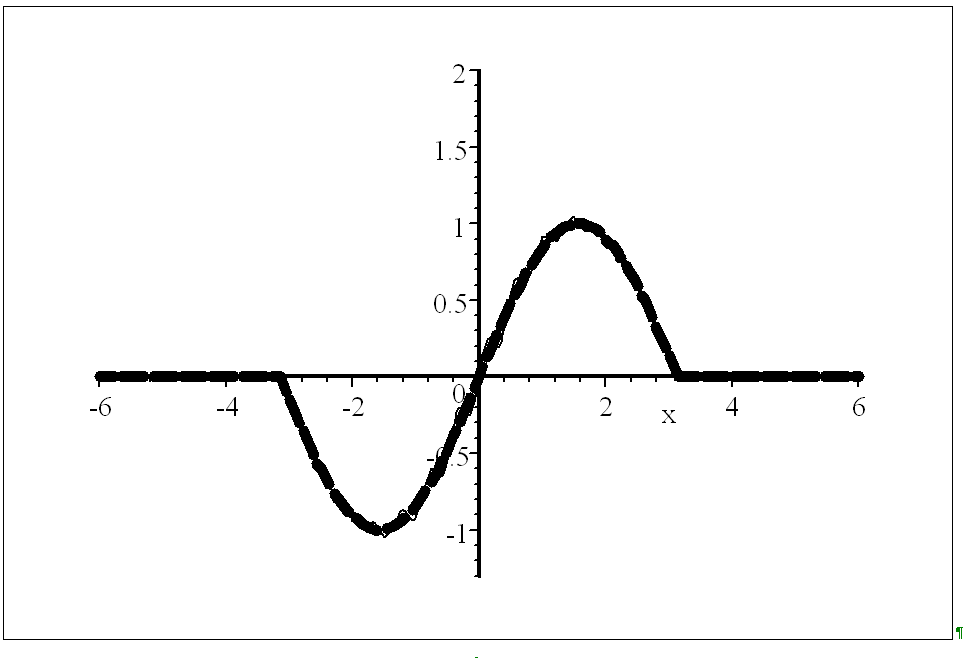}%
\\%
\begin{tabular}
[c]{c}%
Hermite expansion\\
$N=100$%
\end{tabular}
\end{center}}}
{\parbox[b]{1.5333in}{\begin{center}
\includegraphics[
natheight=6.687600in,
natwidth=10.135600in,
height=1.0144in,
width=1.5333in
]%
{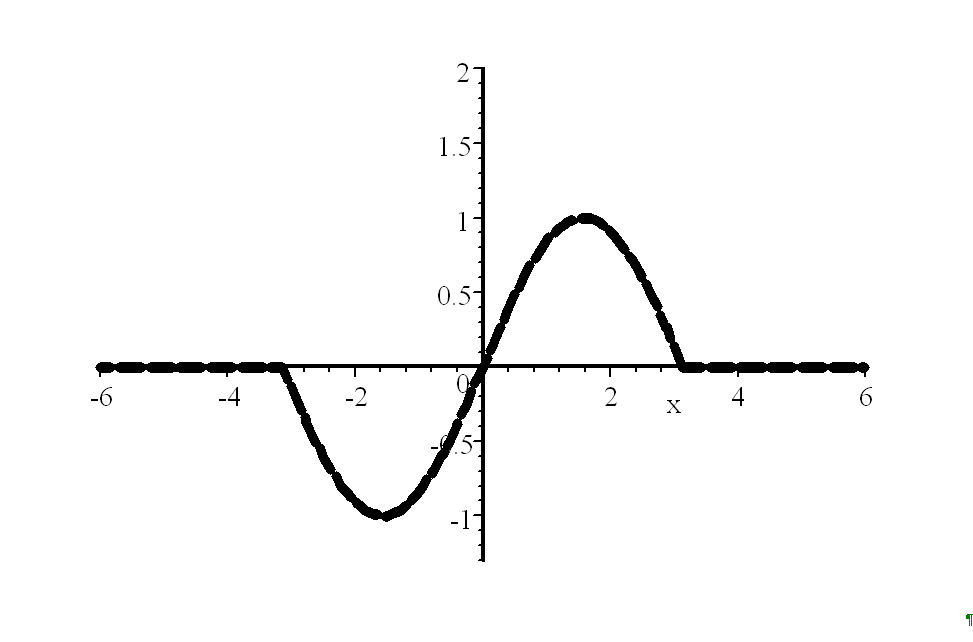}%
\\%
\begin{tabular}
[c]{c}%
Hermite expansion\\
$N=120$%
\end{tabular}
\end{center}}}
\]
We need about 120 terms before visual accuracy is achieved for this simple
function. There is a host of methods in the literature for improving
convergence of the Hermite expansion, but generally we have better success
with functions that are smooth and bounded \cite{Gottlieb}. Our last examples
in this section illustrate how convergence is faster for functions which are
smooth and ``clamped off'', meaning multiplied by $\left(  x-a\right)
^{n}\left(  x+a\right)  ^{n}\chi_{\left[  -a,a\right]  }$ whether or not they
are positive or symmetric.

\noindent%
{\parbox[b]{2.3281in}{\begin{center}
\includegraphics[
natheight=6.301900in,
natwidth=9.718800in,
height=1.5134in,
width=2.3281in
]%
{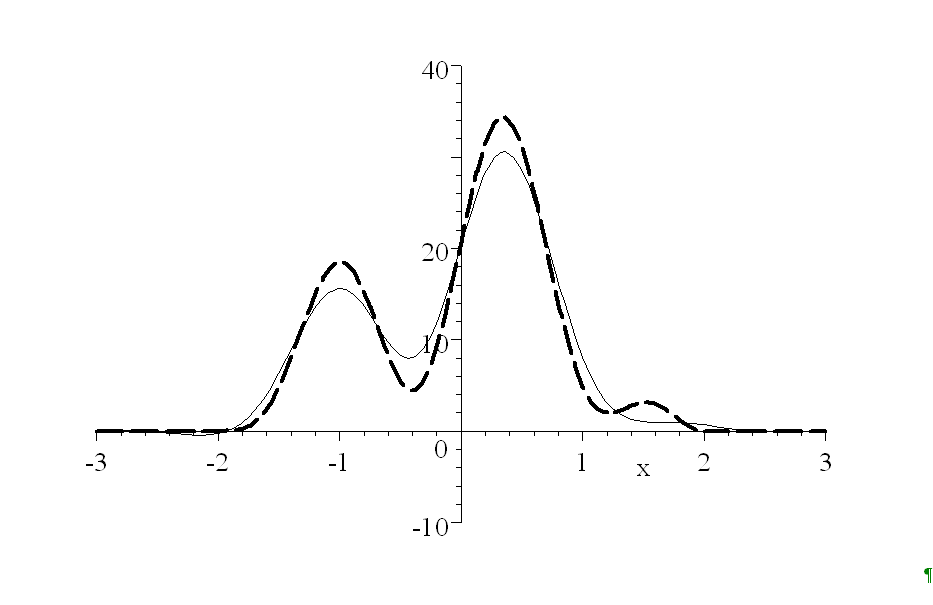}%
\\
Hermite $N=10$%
\end{center}}}
{\parbox[b]{2.3298in}{\begin{center}
\includegraphics[
natheight=6.718700in,
natwidth=10.364800in,
height=1.5152in,
width=2.3298in
]%
{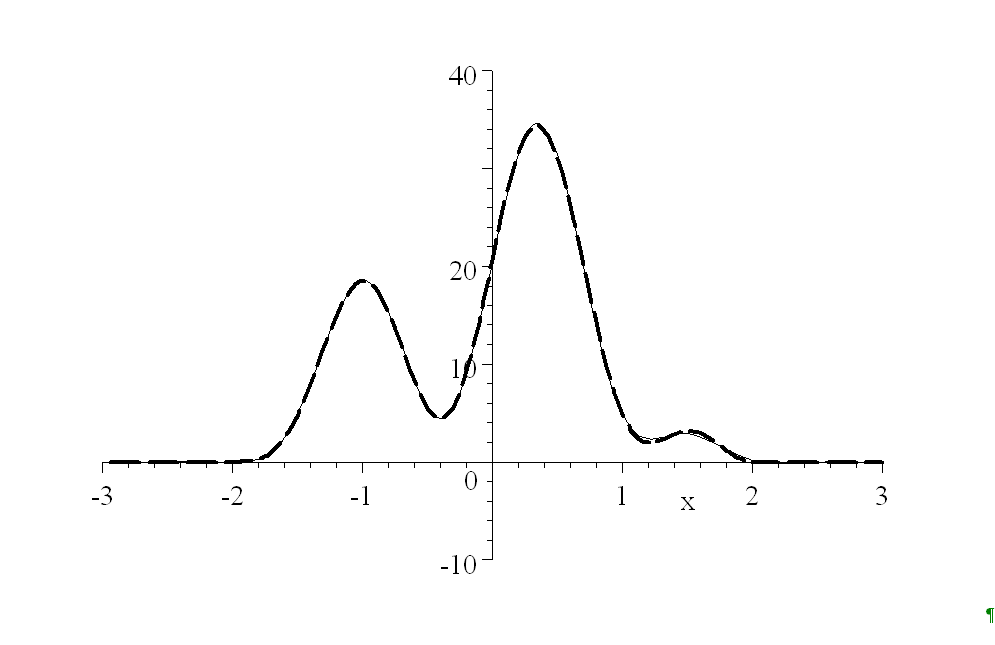}%
\\
Hermite $N=25$%
\end{center}}}

\noindent%
{\parbox[b]{2.335in}{\begin{center}
\includegraphics[
natheight=6.729100in,
natwidth=10.395900in,
height=1.516in,
width=2.335in
]%
{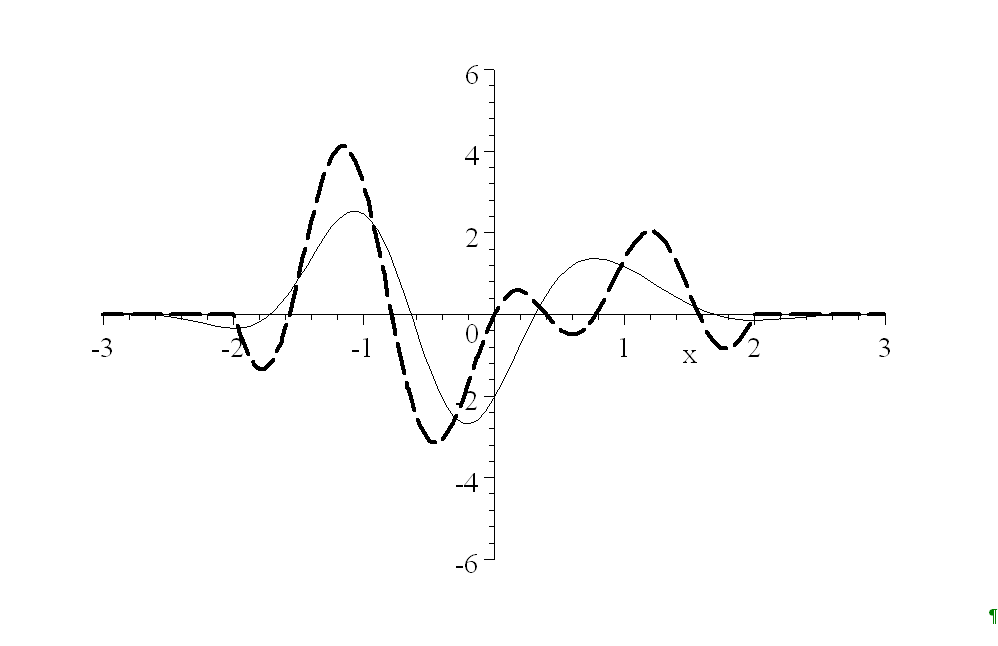}%
\\
Hermite $N=10$%
\end{center}}}
{\parbox[b]{2.3367in}{\begin{center}
\includegraphics[
natheight=6.698000in,
natwidth=10.333600in,
height=1.5195in,
width=2.3367in
]%
{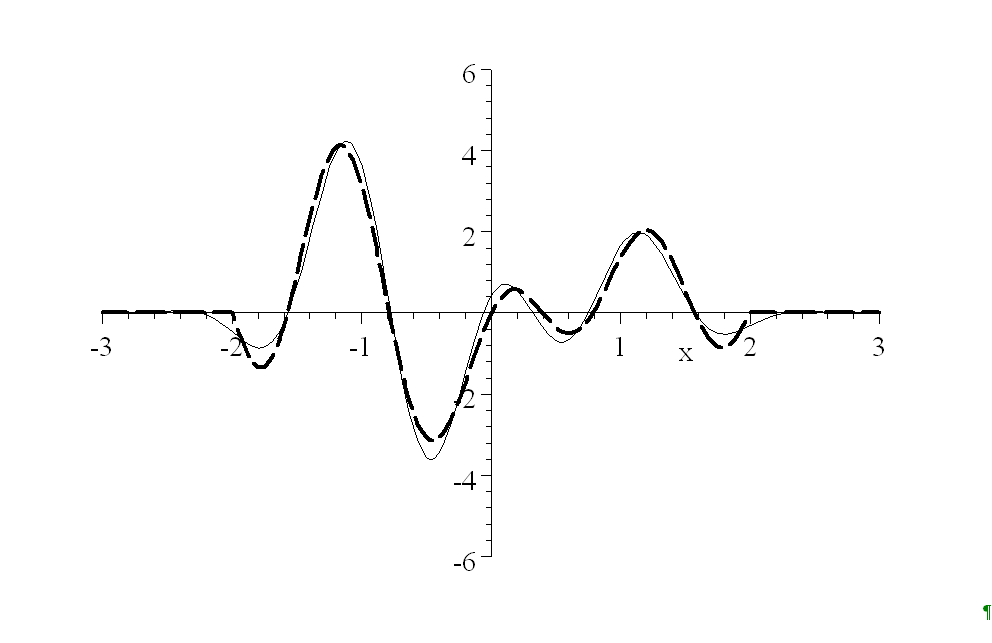}%
\\
Hermite $N=25$%
\end{center}}}

\bigskip

\section{Calculating the coefficients with least squares\label{SecLeastSqrs}}

Theorem \ref{ThmBumps} promises any $L^{2}$ function can be approximated
$f\left(  x\right)  \approx\overset{N}{\underset{n=0}{\sum}}a_{n}e^{-\left(
x-nt\right)  ^{2}}$. Theorem \ref{AlgBump} gives a formula for the
coefficients $a_{n}$ but this formula is not unique, and in fact is not
``best'' according to the classical continuous least squares technique.

\quad\quad%
{\parbox[b]{1.7521in}{\begin{center}
\includegraphics[
natheight=6.489500in,
natwidth=9.437700in,
height=1.2064in,
width=1.7521in
]%
{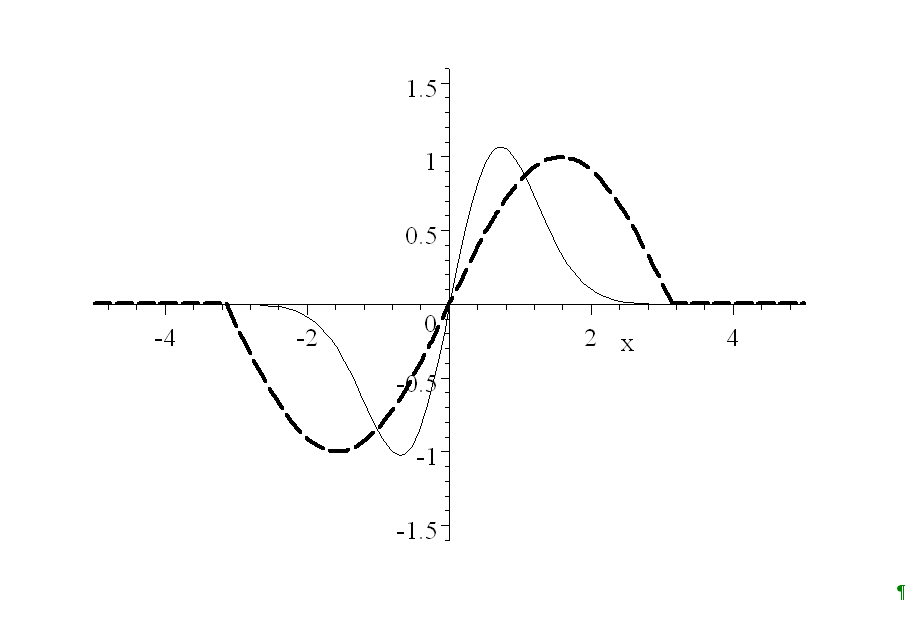}%
\\%
\begin{tabular}
[c]{l}%
Least squares approximation\\
$N=5$, $t=.01$%
\end{tabular}
\end{center}}}
\quad%
{\parbox[b]{1.7556in}{\begin{center}
\includegraphics[
natheight=6.604600in,
natwidth=9.625400in,
height=1.2064in,
width=1.7556in
]%
{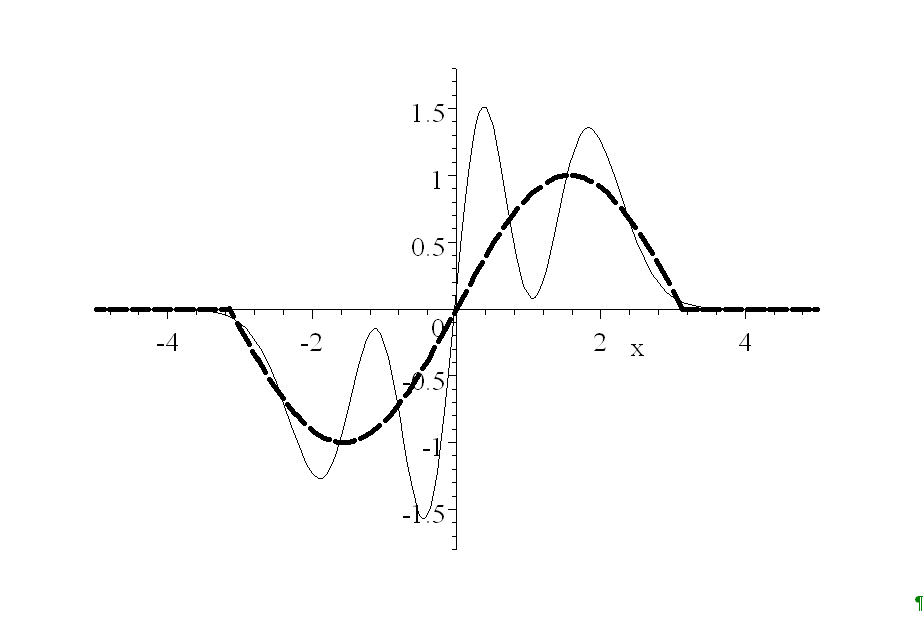}%
\\%
\begin{tabular}
[c]{l}%
Theorem \ref{AlgBump} approximation\\
$N=5$, $t=.01$%
\end{tabular}
\end{center}}}

\bigskip

\noindent In least squares we minimize the error function%
\[
E_{2}\left(  a_{0},...,a_{N}\right)  :=\underset{\mathbb{R}}{\int}\left|
f\left(  x\right)  -\overset{N}{\underset{n=0}{\sum}}a_{n}e^{-\left(
x-nt\right)  ^{2}}\right|  ^{2}dx
\]
by setting $\frac{\partial E_{2}}{\partial a_{j}}=0$ for $j=0,...,N$ and
solving for the $a_{n}$. These $N+1$ linear equations are called the
\textbf{normal equations}. The matrix form of this system is $M\overrightarrow
{v}=\overrightarrow{b}$ where $M$ is the matrix%
\[
M=\left[  \sqrt{\frac{\pi}{2}}e^{-\left(  k^{2}+j^{2}-\frac{\left(
k+j\right)  ^{2}}{2}\right)  t^{2}}\right]  _{j,k=0}^{N}%
\]
and%
\[
\overrightarrow{v}=\left[  a_{j}\right]  _{j=0}^{N}\text{\qquad and\qquad
}\overrightarrow{b}=\left[  \underset{\mathbb{R}}{\int}f\left(  x\right)
e^{-\left(  x-jt\right)  ^{2}}dx\right]  _{j=0}^{N}%
\]
$M$ is symmetric and invertible, so we can always solve for the $a_{n}$. But
these least squares matrices are notorious for being ill-conditioned when
using non-orthogonal approximating functions. The Hilbert matrix is the
archetypical example. The current application is no exception since the matrix
entries are very similar for most choices of $N$ and $t$,$\ $so round-off
error is extreme. Choosing $N=7$ instead of $5$ in the graphed example above
requires almost 300 significant digits.

\section{Low-frequency trig series are dense in $L^{2}$ with Gaussian
weight\label{SecLowFTrig}}

For $f\in L^{2}\left(  \mathbb{R},\mathbb{C}\right)  $ define the norm%
\[
\left\|  f\right\|  _{2,G}:=\left(  \underset{\mathbb{R}}{%
{\textstyle\int}
}\left|  f\left(  x\right)  \right|  ^{2}e^{-x^{2}}dx\right)  ^{1/2}\text{.}%
\]
Write $f\underset{\epsilon,G}{\approx}$ $g$ to mean $\left\|  f-g\right\|
_{2,G}<\epsilon$.

\begin{theorem}
\label{ThmLowFreq}For every $f\in L^{2}\left(  \mathbb{R},\mathbb{C}\right)  $
and $\epsilon>0$ there exists $N$ $\in\mathbb{N}$ and $t_{0}>0$ such that for
any $t\neq0$ with $\left|  t\right|  <t_{0}$
\[
f\left(  x\right)  \underset{\epsilon,G}{\approx}\text{ }\overset{N}%
{\underset{n=0}{%
{\textstyle\sum}
}}a_{n}e^{-intx}%
\]
for some choice of $a_{n}\in\mathbb{C}$ dependent on $N$ and $t$.
\end{theorem}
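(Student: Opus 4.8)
The plan is to derive Theorem \ref{ThmLowFreq} directly from Theorem \ref{AlgBump} by carrying the Gaussian approximation through the Fourier transform, exploiting the fact that the Fourier transform turns a modulated Gaussian $e^{-intx}e^{-x^2/2}$ into a translated Gaussian. The first move is to absorb the weight. Set $u(x):=f(x)e^{-x^2/2}$; then $\|u\|_2^2=\int_{\mathbb{R}}|f(x)|^2e^{-x^2}dx=\|f\|_{2,G}^2$, so $u\in L^2(\mathbb{R})$, and moreover for any coefficients $a_n$
\[
\Big\|f-\sum_{n=0}^N a_ne^{-intx}\Big\|_{2,G}=\Big\|u-\sum_{n=0}^N a_ne^{-intx}e^{-x^2/2}\Big\|_2 .
\]
Hence it suffices to approximate $u$ in the ordinary $L^2$ norm by linear combinations of $g_n(x):=e^{-intx}e^{-x^2/2}$.

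Next I would pass to the frequency side. Using the convention $\widehat{\phi}(\xi)=\int_{\mathbb{R}}\phi(x)e^{-ix\xi}dx$, a completion of the square gives $\widehat{e^{-x^2/2}}(\xi)=\sqrt{2\pi}\,e^{-\xi^2/2}$, and the modulation $e^{-intx}$ merely shifts the transform, so $\widehat{g_n}(\xi)=\sqrt{2\pi}\,e^{-(\xi+nt)^2/2}$: a Gaussian of fixed shape translated to $-nt$. Since $\mathcal{F}$ is $\sqrt{2\pi}$ times a unitary operator on $L^2$, approximating $u$ by $\sum a_ng_n$ is equivalent---up to the constant $\sqrt{2\pi}$, which is absorbed into the tolerance and into the coefficients---to approximating $\widehat{u}$ by $\sum a_ne^{-(\xi+nt)^2/2}$.

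It then remains to bring these translated half-variance Gaussians into the exact form appearing in Theorem \ref{AlgBump}. Reflecting $\xi\mapsto-\xi$ (an isometry of $L^2$) turns the left translates $e^{-(\xi+nt)^2/2}$ into right translates $e^{-(\xi-nt)^2/2}$, and the substitution $\xi=\sqrt2\,\eta$ rescales the half-variance Gaussian to the standard one, $e^{-(\sqrt2\,\eta-nt)^2/2}=e^{-(\eta-ns)^2}$ with $s:=t/\sqrt2$. Writing $\psi(\eta):=\widehat{u}(-\sqrt2\,\eta)\in L^2$, the problem reduces exactly to approximating $\psi$ in $L^2(d\eta)$ by $\sum_{n=0}^N a_ne^{-(\eta-ns)^2}$, which is precisely Theorem \ref{AlgBump} applied to $\psi$ with translation parameter $s$.

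Finally I would run the reductions backward. Theorem \ref{AlgBump} produces $N$ and $s_0>0$ so that suitable coefficients exist for every $s$ with $0<|s|<s_0$; setting $t_0:=\sqrt2\,s_0$ and undoing the rescaling, the reflection, and the Fourier transform yields coefficients $a_n$ for every $t$ with $0<|t|<t_0$, which is the assertion. The only genuine computation is the Gaussian transform identity $\widehat{g_n}(\xi)=\sqrt{2\pi}\,e^{-(\xi+nt)^2/2}$; the one point demanding care is the bookkeeping of tolerances, since the unitary factor $\sqrt{2\pi}$, the reflection, and the Jacobian $\sqrt2$ from the substitution each rescale the $L^2$ error, so I must start Theorem \ref{AlgBump} from an $\epsilon'$ deflated by these fixed constants. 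No analytic obstacle survives, because the underlying density is already delivered by Theorem \ref{AlgBump}.
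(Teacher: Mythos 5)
Your proposal is correct, and it rests on the same core mechanism as the paper's own proof---the Fourier transform exchanges Gaussian translates with Gaussian-damped complex exponentials, so Theorem \ref{AlgBump} can be imported---but you run the reduction in the opposite direction, and this changes which technical work has to be done. The paper starts on the other side of the transform: it forms $f_{2}:=\tfrac{1}{\sqrt{2\pi}}e^{-x^{2}}\ast\mathcal{F}^{-1}\left[ f\right]$, must then prove $f_{2}\in L^{2}$ (the one nontrivial lemma in its proof, done via Cauchy--Schwarz and the fact that the Weierstrass transform preserves the $L^{1}$ integral of nonnegative initial data), applies Theorem \ref{AlgBump} to $f_{2}$, and transforms forward; since the Gaussian that then multiplies $f$ is $e^{-s^{2}/4}$ rather than the weight $e^{-x^{2}}$, the paper finishes with the pointwise comparison $e^{-s^{2}/4}>e^{-s^{2}}$ and a degraded tolerance $\sqrt{2}\epsilon$. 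You instead absorb the weight exactly, via
\[
\left\| f-\sum_{n=0}^{N} a_{n}e^{-intx}\right\| _{2,G}=\left\| fe^{-x^{2}/2}-\sum_{n=0}^{N} a_{n}e^{-intx}e^{-x^{2}/2}\right\| _{2},
\]
transform forward, and repair the variance mismatch by the reflection and rescaling $\xi=-\sqrt{2}\,\eta$; the integrability statement replacing the paper's convolution lemma is then trivial, since $\widehat{u}\in L^{2}$ is immediate from $u=fe^{-x^{2}/2}\in L^{2}$. So your route trades the paper's convolution estimate and weight inequality for elementary change-of-variables bookkeeping, which is a fair and arguably cleaner exchange. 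One small patch is needed: Theorem \ref{AlgBump} is stated for real-valued functions in $L^{2}\left( \mathbb{R}\right)$, while your $\psi$ is complex-valued, so you should apply it separately to $\operatorname{Re}\psi$ and $\operatorname{Im}\psi$ (taking the larger $N$ and the smaller $t_{0}$) and recombine with complex coefficients---exactly as the paper does for the real and imaginary parts of $f_{2}$.
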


\begin{proof}
We use the Fourier transform with convention%
\[
\mathcal{F}\left[  f\right]  \left(  s\right)  =\frac{1}{\sqrt{2\pi}}%
\underset{\mathbb{R}}{%
{\textstyle\int}
}f\left(  x\right)  e^{-isx}dx\text{.}%
\]
$\mathcal{F}$ is a linear isometry of $L^{2}\left(  \mathbb{R},\mathbb{C}%
\right)  $ with%
\begin{align*}
\mathcal{F}\left[  e^{-\alpha x^{2}}\right]   & =\frac{1}{\sqrt{2\alpha}%
}e^{-\frac{s^{2}}{4\alpha}}\text{,}\\
\mathcal{F}\left[  f\left(  x+r\right)  \right]   & =e^{-irs}\mathcal{F}%
\left[  f\left(  x\right)  \right]  \text{\qquad and}\\
\mathcal{F}\left[  g\ast h\right]   & =\sqrt{2\pi}\mathcal{F}\left[  g\right]
\mathcal{F}\left[  h\right]  \text{.}%
\end{align*}
where $\ast$ is convolution.

Let $f\in L^{2}$ and we now show $f_{2}\left(  x\right)  :=\frac{1}{\sqrt
{2\pi}}e^{-x^{2}}\ast\mathcal{F}^{-1}\left[  f\right]  \left(  x\right)  \in
L^{2}$. Notice $g:=\mathcal{F}^{-1}\left[  f\right]  \in L^{2}$ and%
\begin{align*}
\left\|  f_{2}\right\|  _{2}^{2}  & =\underset{\mathbb{R}}{%
{\textstyle\int}
}\left|  \underset{\mathbb{R}}{%
{\textstyle\int}
}\frac{1}{\sqrt{2\pi}}g\left(  x-y\right)  e^{-y^{2}}dy\right|  ^{2}%
ds\leq\frac{1}{2\pi}\underset{\mathbb{R}}{%
{\textstyle\int}
}\underset{\mathbb{R}}{%
{\textstyle\int}
}\left|  g\left(  x-y\right)  \right|  ^{2}e^{-2y^{2}}dyds\\
& =c\left\|  \mathcal{W}_{t_{0}}\left[  \left|  g\right|  ^{2}\right]
\right\|  _{1}=c\left\|  g^{2}\right\|  _{1}=c\left\|  g\right\|  _{2}%
^{2}=c\left\|  f\right\|  _{2}^{2}<\infty
\end{align*}
for some $c>0$. Here $\mathcal{W}_{t}\left[  h\right]  $ is the solution to
the diffusion equation for time $t$ and initial condition $h$. (The notation
$\mathcal{W}$ refers to the Weierstrass transform.) The reason for the third
equality in the previous calculation is that $\mathcal{W}_{t}$ maintains the
$L^{1}$ integral of any positive initial condition $h$ for all time $t>0$
\cite{WidderHeat}.

Now approximate the real and imaginary parts of $f_{2}$ with Theorem
\ref{AlgBump}. Then we get%
\[
\tfrac{1}{\sqrt{2\pi}}e^{-x^{2}}\ast\mathcal{F}^{-1}\left[  f\right]  \left(
x\right)  \underset{\epsilon}{\approx}\ \overset{N}{\underset{n=0}{%
{\textstyle\sum}
}}a_{n}e^{-\left(  x-nt\right)  ^{2}}\text{\qquad}a_{n}\in\mathbb{C}%
\]
and applying $\mathcal{F}$ gives%
\[
\tfrac{1}{\sqrt{2}}e^{-s^{2}/4}f\left(  s\right)  \underset{\epsilon}{\approx
}\text{ }\overset{N}{\underset{n=0}{%
{\textstyle\sum}
}}a_{n}e^{-ints}\tfrac{1}{\sqrt{2}}e^{-s^{2}/4}%
\]
Hence%
\[
f\left(  s\right)  \underset{\sqrt{2}\epsilon,G}{\approx}\text{ }\overset
{N}{\underset{n=0}{%
{\textstyle\sum}
}}a_{n}e^{-ints}%
\]
using the fact that $e^{-s^{2}/4}>e^{-s^{2}}$.
\end{proof}

This result is surprising, even in the context of this paper, because for
instance, series of the form $\overset{N}{\underset{n=-N}{%
{\textstyle\sum}
}}a_{n}e^{-i\left(  x+nt\right)  }$ for all $t$ and $a_{n}$ are not dense in
$L^{2}$ and in fact only inhabit a 4-dimensional subspace of the infinite
dimensional Hilbert space \cite{CalcFoliation}.

\begin{corollary}
On any finite interval $\left[  a,b\right]  $ for any $\omega>0$ the finite
linear combinations of sine and cosine functions with frequency lower than
$\omega$ are dense in $L^{2}\left(  \left[  a,b\right]  ,\mathbb{R}\right)  $.
\end{corollary}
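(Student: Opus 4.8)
The plan is to derive this corollary directly from Theorem \ref{ThmLowFreq} by restricting the ambient space and absorbing the Gaussian weight. The essential point is that the Gaussian weight $e^{-x^2}$ is harmless on a bounded interval because it is bounded above and below by positive constants there. First I would take $f \in L^2([a,b],\mathbb{R})$, extend it by zero to a function $\tilde{f} \in L^2(\mathbb{R},\mathbb{C})$, and apply Theorem \ref{ThmLowFreq} to obtain, for any prescribed $\delta > 0$, an approximation
\[
\tilde{f}(x) \underset{\delta,G}{\approx} \overset{N}{\underset{n=0}{\sum}} a_n e^{-intx}.
\]
Since on $[a,b]$ we have $e^{-x^2} \ge e^{-c}$ where $c := \max(a^2,b^2)$, the weighted norm controls the unweighted norm on the interval: $\int_a^b |h(x)|^2\,dx \le e^{c}\int_a^b |h(x)|^2 e^{-x^2}\,dx \le e^{c}\|h\|_{2,G}^2$. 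Applying this with $h = \tilde{f} - \sum a_n e^{-intx}$ yields an ordinary (unweighted) $L^2([a,b])$ approximation with error at most $e^{c/2}\delta$, so choosing $\delta = \epsilon\,e^{-c/2}$ gives the desired accuracy $\epsilon$.

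The second ingredient is to pass from complex exponentials to real sines and cosines and to control the frequencies. The exponentials appearing are $e^{-intx}$ for $n = 0,\ldots,N$, with frequency $nt$ ranging up to $Nt$. The freedom in Theorem \ref{ThmLowFreq} to take any $t$ with $0 < |t| < t_0$ is exactly what lets us force all frequencies below the prescribed bound $\omega$: once $N$ is fixed by the theorem, I would shrink $t$ further so that $Nt < \omega$, which is possible since $N$ does not grow as $t$ decreases. Each term $e^{-intx} = \cos(ntx) - i\sin(ntx)$ is a combination of a sine and cosine of frequency $nt < \omega$, and since $f$ is real-valued one takes the real part of the approximation (or equivalently pairs $\pm n$ frequencies), so the real and imaginary parts of the $a_n$ supply the real coefficients of the trigonometric combination.

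The main obstacle, though a minor one, is the bookkeeping around the order in which $N$, $t_0$, and the frequency cutoff $\omega$ are chosen, since $\omega$ is fixed in advance while $N$ is produced by the theorem. The clean way to handle this is to observe that Theorem \ref{ThmLowFreq} first delivers $N$ and a threshold $t_0$, and the conclusion holds for \emph{every} $t$ below that threshold; thus I am free to pick $t < \min(t_0, \omega/N)$ after $N$ is known. This guarantees simultaneously that the approximation is valid and that every frequency $nt$ is strictly less than $\omega$. No delicate estimate is needed beyond the elementary weight comparison on $[a,b]$; the substance of the result is entirely inherited from Theorem \ref{ThmLowFreq}, and the corollary merely packages it for a bounded interval and real scalars.
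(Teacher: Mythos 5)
Your proposal is correct and follows essentially the same route as the paper's proof: apply Theorem \ref{ThmLowFreq} to $f$ extended by zero, use the fact that the Gaussian weight is bounded above and below on $[a,b]$ so the weighted and unweighted norms are equivalent there, choose $t<\min(t_0,\omega/N)$ so all frequencies fall below $\omega$, and take real parts to obtain sine and cosine combinations. Your version merely makes explicit the quantifier ordering and the constant $e^{c/2}$ in the norm comparison, which the paper leaves implicit.
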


\begin{proof}
On $\left[  a,b\right]  $ the Gaussian is bounded and so the norms with or
without weight function are equivalent. Apply Theorem \ref{ThmLowFreq} to
$f\in L^{2}\left(  \left[  a,b\right]  ,\mathbb{R}\right)  $ and choose $t$
such that $Nt<\omega$ to get%
\[
f\underset{\epsilon}{\approx}\text{ }\overset{N}{\underset{n=0}{%
{\textstyle\sum}
}}\operatorname{Re}\left(  a_{n}\right)  \cos\left(  ntx\right)
+\operatorname{Im}\left(  a_{n}\right)  \sin\left(  ntx\right)
\]
where
\[
a_{n}=\frac{\left(  -1\right)  ^{n}}{n!2\pi}\overset{N}{\underset{k=n}{%
{\textstyle\sum}
}}\tfrac{1}{\left(  k-n\right)  !\left(  2t\right)  ^{k}}\underset{\mathbb{R}%
}{%
{\textstyle\int}
}\left[  e^{-x^{2}}\ast\mathcal{F}^{-1}\left[  f\right]  \left(  x\right)
\right]  e^{x^{2}}\frac{d^{k}}{dx^{k}}\left(  e^{-x^{2}}\right)  dx\text{.}%
\]
\end{proof}

Applying Remark \ref{RemDense} to this result shows even discrete sets of
positive frequencies that approach 0 make the span of the corresponding sine
and cosine functions equal to$L^{2}\left(  \left[  a,b\right]  ,\mathbb{R}%
\right)  $.

Finally, low-frequency cosines span the even functions:

\begin{proposition}
On any finite interval $\left[  0,b\right]  $ for any $\omega>0$ the finite
linear combinations of cosine functions with frequency lower than $\omega$ are
dense in $L^{2}\left(  \left[  0,b\right]  ,\mathbb{R}\right)  $.
\end{proposition}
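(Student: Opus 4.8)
The plan is to reduce the statement about cosines on $[0,b]$ to the already-proved Corollary about sines and cosines on a symmetric interval $[-b,b]$. The key observation is that cosine functions are even, so any linear combination of cosines is an even function on $[-b,b]$, and conversely every even $L^2$ function should be approximable by even trigonometric functions alone. So I would start by taking $f\in L^2([0,b],\mathbb{R})$ and extending it to an even function $\tilde f$ on $[-b,b]$ by setting $\tilde f(x):=f(|x|)$. This extension is square-integrable on $[-b,b]$ with $\|\tilde f\|_{L^2([-b,b])}^2=2\|f\|_{L^2([0,b])}^2$.

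Next I would apply the Corollary to $\tilde f$ on the interval $[-b,b]$ with the frequency bound $\omega$, obtaining an approximation
\[
\tilde f\underset{\epsilon}{\approx}\ \overset{N}{\underset{n=0}{\sum}}\alpha_n\cos(ntx)+\beta_n\sin(ntx)
\]
where $\alpha_n=\operatorname{Re}(a_n)$, $\beta_n=\operatorname{Im}(a_n)$, and $Nt<\omega$. The heart of the argument is then to discard the sine terms. Because $\tilde f$ is even and each $\sin(ntx)$ is odd while each $\cos(ntx)$ is even, I would project the approximation onto the even subspace of $L^2([-b,b])$. Concretely, averaging the inequality for $x$ and $-x$: since $\|\tilde f-g\|_{L^2([-b,b])}<\epsilon$ and $\tilde f$ is even, replacing $g(x)$ by its even part $\tfrac12(g(x)+g(-x))$ can only decrease the error (orthogonal projection onto a closed subspace is norm-nonincreasing, and $\tilde f$ already lies in that subspace). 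The even part of $g$ is exactly $\sum_{n=0}^N \alpha_n\cos(ntx)$, because the cosines are fixed and the sines are annihilated. Thus
\[
\tilde f\underset{\epsilon}{\approx}\ \overset{N}{\underset{n=0}{\sum}}\alpha_n\cos(ntx)
\]
on $[-b,b]$, still with all frequencies $nt<\omega$.

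Finally I would restrict back to $[0,b]$. Since $\tilde f$ agrees with $f$ there and the $L^2([0,b])$ error is at most the $L^2([-b,b])$ error, we get $f\underset{\epsilon}{\approx}\sum_{n=0}^N\alpha_n\cos(ntx)$ on $[0,b]$, which is a low-frequency cosine approximation as desired. \textbf{The main obstacle} is justifying cleanly that dropping the sine terms does not increase the error; the honest way to see this is the orthogonal-projection remark, using that the even functions form a closed subspace of $L^2([-b,b])$, that $\tilde f$ belongs to it, and that the even part of the approximant is precisely the cosine part. Everything else—the even extension, the norm comparison between $[0,b]$ and $[-b,b]$, and the frequency bookkeeping—is routine.
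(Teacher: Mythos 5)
Your proposal is correct and follows essentially the same route as the paper: the paper likewise extends $f$ evenly to $\left[ -b,b\right]$, applies the corollary to get a sine--cosine approximation, and then kills the sine terms by passing to even parts, justified by the observation that $g\underset{\epsilon}{\approx}h$ implies $g_{e}\underset{\epsilon}{\approx}h_{e}$ --- which is exactly your norm-nonincreasing orthogonal projection onto the even subspace in different words. Your added bookkeeping (the even extension $f(|x|)$, the restriction back to $\left[ 0,b\right]$, and the frequency bound $nt<\omega$) only makes explicit what the paper leaves implicit.
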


\begin{proof}
Let $f\in L^{2}\left(  \left[  0,b\right]  ,\mathbb{R}\right)  $ and extend it
as an even function on $\left[  -b,b\right]  $. Now use the previous corollary
to write
\[
f\underset{\epsilon}{\approx}\text{ }\overset{N}{\underset{n=0}{%
{\textstyle\sum}
}}a_{n}\cos\left(  ntx\right)  +b_{n}\sin\left(  ntx\right)  \text{.}%
\]
We'd like to conclude right now that the $b_{n}=0$ or $b_{n}\approx0$, but
that is not true. However, every function $g$ on $\left[  -b,b\right]  $ may
be written uniquely as a sum of even and odd functions%
\begin{align*}
g  & =g_{e}+g_{o}\\
g_{e}\left(  x\right)   & =\frac{g\left(  x\right)  +g\left(  -x\right)  }%
{2}\\
g_{e}\left(  x\right)   & =\frac{g\left(  x\right)  -g\left(  -x\right)  }{2}%
\end{align*}
and so%
\[
g\underset{\epsilon}{\approx}\text{ }h\text{\quad}\Rightarrow\text{\quad}%
g_{e}\underset{\epsilon}{\approx}\text{ }h_{e}\text{.}%
\]
Therefore%
\[
f=f_{e}\underset{\epsilon}{\approx}\text{ }\left[  \overset{N}{\underset{n=0}{%
{\textstyle\sum}
}}a_{n}\cos\left(  ntx\right)  +b_{n}\sin\left(  ntx\right)  \right]
_{e}=\overset{N}{\underset{n=0}{%
{\textstyle\sum}
}}a_{n}\cos\left(  ntx\right)  \text{.}%
\]
\end{proof}

Beware this last result; it's not as strong as Fourier approximation. The
coefficients for the sine functions calculated above may be large; the
proposition merely promises the linear combination of the sine terms is small.
Using least squares, however, will have vanishing sine coefficients.

\section{Origins and generalizations}

The mathematical inspiration for Theorem \ref{ThmBumps} comes from geometrical
investigations in infinite dimensional control theory. We noticed that
function translation and vector translation in $L^{2}\left(  \mathbb{R}%
\right)  $ do not commute. Specifically, ``function translation'' is a flow on
the infinite dimensional vector space $L^{2}\left(  \mathbb{R}\right)  $ given
by the map $F:L^{2}\left(  \mathbb{R}\right)  \times\mathbb{R}\rightarrow
L^{2}\left(  \mathbb{R}\right)  $ where $F_{t}\left(  f\right)  \left(
x\right)  :=f\left(  x+t\right)  $. ``Vector translation'' in the direction of
$g\in L^{2}\left(  \mathbb{R}\right)  $ is the flow $G:L^{2}\left(
\mathbb{R}\right)  \times\mathbb{R}\rightarrow L^{2}\left(  \mathbb{R}\right)
$ where $G_{t}\left(  f\right)  :=f+tg$. Taking for example $g\left(
x\right)  :=e^{-x^{2}}$ and composing $F$ and $G$ we see $F_{t}\circ G_{t}\neq
G_{t}\circ F_{t}$ since for $f\equiv0$%
\[
F_{t}\circ G_{t}\left(  f\right)  \left(  x\right)  =te^{-\left(  x+t\right)
^{2}}\text{ \qquad while\qquad}G_{t}\circ F_{t}\left(  f\right)  \left(
x\right)  =te^{-x^{2}}\text{.}%
\]
Notice however the key fact%
\[
\frac{F_{t}\circ G_{t}-G_{t}\circ F_{t}}{t^{2}}\left(  f\right)
\rightarrow\frac{d}{dx}\left(  e^{-x^{2}}\right)  \text{\qquad as
}t\rightarrow0
\]
In finite dimensions the commutator quotient above gives the Lie bracket
$\left[  X,Y\right]  $ of the vector fields $X$ and $Y$ which generate the
flows $F$ and $G$, respectively. A fundamental result in finite-dimensional
control theory states that the reachable set via $X$ and $Y$ is given by the
integral surface to the distribution made up of iterated Lie brackets starting
from $X$ and $Y$ (Chow's Theorem, which is an interpretation of Frobenius'
Foliation Theorem, see \cite{Sontag}, e.g.). The idea we are exploiting is
that iterated Lie brackets for our flows $F$ and $G$ will give successive
derivatives of the Gaussian, whose span is dense in $L^{2}\left(
\mathbb{R}\right)  $. Consequently, the reachable set via $F$ and $G$ from
$f\equiv0$ should be all of $L^{2}\left(  \mathbb{R}\right)  $. That is to
say, sums of translates and multiples of one Gaussian (with fixed variance)
can approximate any integrable function.

Unfortunately this program doesn't automatically work on the infinite
dimensional vector space $L^{2}\left(  \mathbb{R}\right)  $ since the function
translation flow is not generated by a simple vector field on $L^{2}\left(
\mathbb{R}\right)  $. So instead of studying vector fields, we consider flows
as primary. The fundamental results can be rewritten and still hold in the
general context of a metric space \cite{CalcFoliation}. Then other functions
besides $g\left(  x\right)  =e^{-x^{2}}$ can be checked to be derivative
generating and other flows may be used in place of translation. E.g., Fourier
approximation is achieved using dilation $F:L^{2}\left(  \mathbb{R}%
,\mathbb{C}\right)  \times\mathbb{R}\rightarrow L^{2}\left(  \mathbb{R}%
,\mathbb{C}\right)  $ where $F_{t}\left(  f\right)  \left(  x\right)
:=f\left(  e^{t}x\right)  $ and $G_{t}\left(  f\right)  \left(  x\right)
:=f\left(  x\right)  +te^{ix}$. This gives us a general tool for determining
the density of various families of functions.

Another opportunity for generalizing the results of this paper presents itself
with the observation that Hermite expansions are valid for functions defined
on $\mathbb{C}$ or $\mathbb{R}^{n}$ and in spaces of tempered distributions;
and divided differences works in all of these spaces as well.

Note also that while the results of section \ref{SecAppAlgo} work for uniform
approximations of continuous functions on finite intervals (Remark
\ref{RemUnifG}), this is an open question for low-frequency trigonometric approximations.

The results of this paper can be ported to the language of control theory
where we can then conclude the system%
\begin{equation}
u_{t}=c_{1}\left(  t\right)  u_{x}+c_{2}(t)e^{-x^{2}}\label{LineControl2}%
\end{equation}
is bang-bang controllable with controls of the form $c_{1},c_{2}%
:\mathbb{R}^{+}\rightarrow\left\{  -1,0,1\right\}  $. Theorem \ref{AlgBump}
drives the initial condition $f\equiv0$ to any state in $L^{2}$ under the
system $\left(  \ref{LineControl2}\right)  $, but may be nowhere near optimal
for approximating a function such as $e^{-\left(  x+10\right)  ^{2}}$, since
it uses only Gaussians $e^{-\left(  x+s\right)  ^{2}}$ with choices of $s<<10$.

Finally, interpreting Theorem \ref{ThmBumps} in terms of signal analysis, we
see a Gaussian filter is a universal synthesizer with arbitrarily short load
time. Let $G\left(  x\right)  :=\frac{1}{\sqrt{\pi}}e^{-x^{2}}$. A Gaussian
filter is a linear time-invariant system represented by the operator%
\[
\mathcal{W}\left(  f\right)  \left(  x\right)  :=\left(  f\ast G\right)
\left(  x\right)  =\frac{1}{\sqrt{\pi}}\int_{\mathbb{R}}f\left(  y\right)
e^{-\left(  s-x\right)  ^{2}}dy\text{.}%
\]
Notice if you feed $\mathcal{W}$ a Dirac delta distribution $\delta_{t}$ (an
ideal impulse at time $x=t$) you get $\mathcal{W}\left(  \delta_{t}\right)
=G\left(  x-t\right)  $. Then Theorem \ref{ThmBumps} gives

\begin{corollary}
For any $f\in L^{2}\left(  \mathbb{R}\right)  $ and any $\epsilon>0$ and any
$\tau>0$ there exists $t>0$ and $N\in\mathbb{N}$ with $tN<\tau$ such that%
\[
f\underset{\epsilon}{\approx}\mathcal{W}\left(  \overset{N}{\underset{n=0}{%
{\textstyle\sum}
}}a_{n}\delta_{nt}\right)
\]
for some choice of $a_{n}\in\mathbb{R}$.
\end{corollary}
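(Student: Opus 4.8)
The plan is to exploit the linearity of the Gaussian filter $\mathcal{W}$ together with its defining action on impulses, $\mathcal{W}\left(\delta_{nt}\right)=G\left(x-nt\right)=\tfrac{1}{\sqrt{\pi}}e^{-\left(x-nt\right)^{2}}$, so that the problem collapses onto Theorem \ref{AlgBump}. First I would compute, using linearity of convolution,
\[
\mathcal{W}\left(\overset{N}{\underset{n=0}{\textstyle\sum}}a_{n}\delta_{nt}\right)=\overset{N}{\underset{n=0}{\textstyle\sum}}a_{n}\mathcal{W}\left(\delta_{nt}\right)=\tfrac{1}{\sqrt{\pi}}\overset{N}{\underset{n=0}{\textstyle\sum}}a_{n}e^{-\left(x-nt\right)^{2}}.
\]
This reduces the corollary to approximating $f$ by a linear combination of translated Gaussians $e^{-\left(x-nt\right)^{2}}$ located at $x=nt$, which is exactly the content of Theorems \ref{ThmBumps} and \ref{AlgBump}, up to absorbing the harmless constant $\tfrac{1}{\sqrt{\pi}}$ into the coefficients.

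Next I would invoke Theorem \ref{AlgBump} directly: it supplies $N\in\mathbb{N}$ and a threshold $t_{0}>0$ so that for every $t$ with $\left|t\right|<t_{0}$ there are coefficients $c_{n}$ with $f\underset{\epsilon}{\approx}\overset{N}{\underset{n=0}{\textstyle\sum}}c_{n}e^{-\left(x-nt\right)^{2}}$. The feature I want to stress is that $N$ is pinned down \emph{before} $t$ is chosen (it is dictated solely by the Hermite expansion of $f$), so $t$ may be shrunk freely without enlarging $N$. To meet the extra constraint $tN<\tau$ encoding the ``arbitrarily short load time,'' I would simply take any positive $t<\min\left\{t_{0},\tau/N\right\}$; then $tN<\tau$ automatically, and the $\epsilon$-approximation remains valid. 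Setting $a_{n}:=\sqrt{\pi}\,c_{n}$ yields $\mathcal{W}\left(\overset{N}{\underset{n=0}{\textstyle\sum}}a_{n}\delta_{nt}\right)=\overset{N}{\underset{n=0}{\textstyle\sum}}c_{n}e^{-\left(x-nt\right)^{2}}\underset{\epsilon}{\approx}f$, which is the claim.

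The main (and essentially the only) obstacle is guaranteeing that $N$ need not grow as $t\to0$; if the approximation forced $N$ to blow up when $t$ shrank, the product $tN$ could not be made arbitrarily small and the ``short load time'' conclusion would fail. This is precisely what Theorem \ref{AlgBump} secures, and what Remark \ref{RemDense} recasts as density of Gaussians whose means lie in an arbitrarily short interval $\left[0,\epsilon\right)$. Everything else is bookkeeping: verifying the constant $\tfrac{1}{\sqrt{\pi}}$ rescaling and confirming that $\mathcal{W}$ commutes with the finite sum, both of which are immediate from linearity.
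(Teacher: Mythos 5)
Your proof is correct and takes essentially the same route the paper intends: identify $\mathcal{W}\left(\delta_{nt}\right)$ with the translated Gaussian $\tfrac{1}{\sqrt{\pi}}e^{-\left(x-nt\right)^{2}}$ by linearity, absorb the constant into the coefficients, and reduce to the density theorem. Your appeal to Theorem \ref{AlgBump} rather than Theorem \ref{ThmBumps} is in fact the more careful choice, since only the former's quantifier order (with $N$ fixed by the Hermite expansion before $t$ is shrunk) justifies choosing $t<\min\left\{t_{0},\tau/N\right\}$ to meet the constraint $tN<\tau$, a point the paper leaves implicit.
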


Feed a Gaussian filter a linear combination of impulses and we can synthesize
any signal and arbitrarily small load time $\tau$. The design of physical
approximations to an analog Gaussian filter are detailed in \cite{Dishal},
\cite{Madrenas}.

\section{Appendix\label{SecImpConv}: Approximating higher derivatives}

The results in this paper may be much improved with voluminous techniques
available from numerical analysis. E.g., \cite{Greengard} gives an algorithm
which speeds the calculation of sums of Gaussians, and \cite{Leibon} explores
Hermite expansion acceleration useful in step 1 of the proof of Theorem
\ref{ThmBumps}. This section is devoted to reviewing methods which improve the
error in step 2, approximating derivatives of the Gaussian with finite
differences. We also derive the error formula used in Proposition
\ref{propLpDivDiff}.

Above we approximated derivatives with the formula%
\begin{equation}
\frac{d^{n}}{dx^{n}}f\left(  x\right)  =%
\begin{tabular}
[c]{c}%
$\underbrace{\frac{1}{t^{n}}%
{\textstyle\sum_{k=0}^{n}}
\left(  -1\right)  ^{n-k}\binom{n}{k}f\left(  x+kt\right)  }$\\
gives round-off error as $t\rightarrow0^{+}$%
\end{tabular}%
\begin{tabular}
[c]{c}%
$\underset{}{+}$%
\end{tabular}%
\begin{tabular}
[c]{l}%
$\underbrace{O\left(  t\right)  }$\\
truncation error
\end{tabular}
\text{.}\label{LineNthDer=O(t)}%
\end{equation}
The N\"{o}rlund-Rice integral may be of interest for extremely large $n$ as it
avoids the calculation of the binomial coefficient by evaluating a complex
integral. In this section, though, we devote our attention to deriving
$n$-point formulas; these formulas decrease round-off error by increasing the
number of evaluations $f\left(  x+kt\right)  $--this shrinks the truncation
error without sending $t\rightarrow0$.

In approximating the $k$th derivative with an $n+1$ point formula%
\[
f^{\left(  k\right)  }\left(  x\right)  \approx\frac{1}{t^{k}}\overset
{n}{\underset{i=0}{%
{\textstyle\sum}
}}c_{i}f\left(  x+k_{i}t\right)
\]
we wish to calculate the coefficients $c_{i}$. In the forward difference
method, the $k_{i}=i$, but keeping these values general allows us to find the
coefficients for the central or backward difference formulas just as easily.
The following method for finding the $c_{i}$ was shown to us by our student
Jeffrey Thornton who rediscovered the formula.

Taylor's Theorem has%
\[
f\left(  x+k_{i}t\right)  =\overset{n}{\underset{j=0}{%
{\textstyle\sum}
}}\frac{\left(  k_{i}t\right)  ^{j}}{j!}f^{\left(  j\right)  }\left(
x\right)  +\frac{\left(  k_{i}t\right)  ^{n+1}}{\left(  n+1\right)
!}f^{\left(  n+1\right)  }\left(  \xi_{i}\right)
\]
for some $\xi_{i}$ between $x$ and $x+k_{i}t$. From this it follows%
\begin{align*}
& \overset{n}{\underset{i=0}{%
{\textstyle\sum}
}}c_{i}f\left(  x+k_{i}t\right) \\
& =\left[
\begin{tabular}
[c]{c}%
$f\left(  x\right)  $\\
$tf^{\prime}\left(  x\right)  $\\
$\vdots$\\
$t^{n}f^{\left(  n\right)  }\left(  x\right)  $\\
$t^{n+1}$%
\end{tabular}
\right]  ^{T}\left[
\begin{tabular}
[c]{cccc}%
$1$ & $1$ & $\cdots$ & $1$\\
$k_{0}$ & $k_{1}$ & $\cdots$ & $k_{n}$\\
$\frac{k_{0}^{2}}{2!}$ & $\frac{k_{1}^{2}}{2!}$ & $\cdots$ & $\frac{k_{n}^{2}%
}{2!}$\\
$\vdots$ & $\vdots$ & $\ddots$ & $\vdots$\\
$\frac{k_{0}^{n}}{n!}$ & $\frac{k_{1}^{n}}{n!}$ & $\cdots$ & $\frac{k_{n}^{n}%
}{n!}$\\
$\tfrac{k_{0}^{n+1}f^{\left(  n+1\right)  }\left(  \xi_{0}\right)  }{\left(
n+1\right)  !}$ & $\frac{k_{1}^{n+1}f^{\left(  n+1\right)  }\left(  \xi
_{1}\right)  }{\left(  n+1\right)  !}$ & $\cdots$ & $\frac{k_{n}%
^{n+1}f^{\left(  n+1\right)  }\left(  \xi_{n}\right)  }{\left(  n+1\right)
!}$%
\end{tabular}
\right]  \left[
\begin{tabular}
[c]{c}%
$c_{0}$\\
$c_{1}$\\
$\vdots$\\
$c_{n}$%
\end{tabular}
\right]
\end{align*}
Now pick $c=\left[  c_{i}\right]  $ as a solution to%
\begin{equation}
\left[
\begin{tabular}
[c]{cccc}%
$1$ & $1$ & $\cdots$ & $1$\\
$k_{0}$ & $k_{1}$ & $\cdots$ & $k_{n}$\\
$\frac{k_{0}^{2}}{2!}$ & $\frac{k_{1}^{2}}{2!}$ & $\cdots$ & $\frac{k_{n}^{2}%
}{2!}$\\
$\vdots$ & $\vdots$ & $\ddots$ & $\vdots$\\
$\frac{k_{0}^{n}}{n!}$ & $\frac{k_{1}^{n}}{n!}$ & $\cdots$ & $\frac{k_{n}^{n}%
}{n!}$%
\end{tabular}
\right]  \left[
\begin{tabular}
[c]{c}%
$c_{0}$\\
$c_{1}$\\
$\vdots$\\
$c_{n}$%
\end{tabular}
\right]  =\left[
\begin{tabular}
[c]{c}%
$0$\\
$\vdots$\\
$1$\\
$\vdots$\\
$0$%
\end{tabular}
\right] \label{LineNumDiffCoeffMatrix}%
\end{equation}
which is possible since the $k_{i}$ are different, so the matrix is
invertible, as is seen using the Vandermonde determinant%
\[
\det=\frac{\underset{0\leq i<j\leq n}{\Pi}\left(  k_{j}-k_{i}\right)
}{\underset{2\leq i\leq n}{\Pi}i!}\text{.}%
\]
Then we must have%
\begin{align*}
\overset{n}{\underset{i=0}{%
{\textstyle\sum}
}}c_{i}f\left(  x+k_{i}t\right)   & =\left[
\begin{tabular}
[c]{c}%
$f\left(  x\right)  $\\
$tf^{\prime}\left(  x\right)  $\\
$\vdots$\\
$t^{n}f^{\left(  n\right)  }\left(  x\right)  $\\
$t^{n+1}$%
\end{tabular}
\right]  ^{T}\left[
\begin{tabular}
[c]{l}%
$0$\\
$\vdots$\\
$1$\quad($k$-th position)\\
$\vdots$\\
$0$\\
$\frac{1}{\left(  n+1\right)  !}\overset{n}{\underset{i=0}{%
{\textstyle\sum}
}}c_{i}k_{i}^{n+1}f^{\left(  n+1\right)  }\left(  \xi_{i}\right)  $%
\end{tabular}
\right] \\
& =t^{k}f^{\left(  k\right)  }\left(  x\right)  +\frac{t^{n+1}}{\left(
n+1\right)  !}\overset{n}{\underset{i=1}{%
{\textstyle\sum}
}}c_{i}k_{i}^{n+1}f^{\left(  n+1\right)  }\left(  \xi_{i}\right)  \text{.}%
\end{align*}
Therefore%
\[
f^{\left(  k\right)  }\left(  x\right)  =\frac{1}{t^{k}}\overset{n}%
{\underset{i=0}{%
{\textstyle\sum}
}}c_{i}f\left(  x+k_{i}t\right)  +Error
\]
for $c_{i}$ which satisfy $\left(  \ref{LineNumDiffCoeffMatrix}\right)  $
where%
\[
Error=-\dfrac{t^{n+1-k}}{\left(  n+1\right)  !}\overset{n}{\underset{i=0}{%
{\textstyle\sum}
}}c_{i}k_{i}^{n+1}f^{\left(  n+1\right)  }\left(  \xi_{i}\right)  \text{.}%
\]
This $Error$ formula shows how truncation error may be decreased by increasing
$n$ without shrinking $t$, thus combatting round-off error at the expense of
increased computation of sums.

The coefficients in $\left(  \ref{LineNthDer=O(t)}\right)  $ are obtained by
solving $M$ for the $c_{i}$ with $k_{i}$ chosen as $k_{i}=i$.

Thornton also points out that the $k_{i}$ may be chosen as complex values when
$f$ is analytic (as is the case with our Gaussians). This gives us another
opportunity to mitigate round-off error, since a greater quantity of
regularly-spaced nodes $k_{i}$ can be packed into an epsilon ball around zero
in the complex plane than on the real line.

As final note we mention there have been numerous advances to the present day
in inverting the Vandermonde matrix. We mention only the earliest application
to numerical differentiation \cite{Spitzbart} which gives a formula in terms
of the Stirling numbers.

\end{document}